\documentclass[10pt]{article}

\usepackage[english]{babel}

\usepackage[letterpaper,top=2cm,bottom=2cm,left=3cm,right=3cm,marginparwidth=1.75cm]{geometry}

\usepackage{amsmath}
\usepackage{amssymb}
\usepackage{graphicx}
\usepackage[colorlinks=true, allcolors=blue]{hyperref}
\usepackage[shortlabels]{enumitem}

\newcommand{\dive}{\operatorname{div}}
\usepackage{amsthm}
\newtheorem{theorem}{Theorem}[section]
\newtheorem{lemma}[theorem]{Lemma}
\newtheorem{proposition}[theorem]{Proposition}
\newtheorem{corollary}[theorem]{Corollary}

\newtheorem{remark}[theorem]{Remark}
\newtheorem{definition}{Definition}[section]

\title{Recovery of an Anisotropic Conductivity from the Neumann-to-Dirichlet Map in a Semilinear Elliptic Equation}

\author{
Elena Beretta\textsuperscript{1,*} \and
Elisa Francini\textsuperscript{2} \and
Dario Pierotti\textsuperscript{3} \and
Eva Sincich\textsuperscript{4}
}

\date{}

\begin{document}

\maketitle

\footnotetext[1]{NYU Abu Dhabi, UAE. Corresponding author: \texttt{eb147@nyu.edu}.}
\footnotetext[2]{Università di Firenze, Italy.}
\footnotetext[3]{Politecnico di Milano, Italy.}
\footnotetext[4]{Università di Trieste, Italy.}

\begin{abstract}
We study the inverse boundary value problem of detecting a non-uniform conductivity motivated by pacing-guided ablation in cardiac electrophysiology. At the stationary level, the transmembrane potential $u$ in a region \(\Omega\subset\mathbb{R}^3\) of cardiac tissue satisfies
\[
-\nabla\!\cdot(\gamma\nabla u)+\alpha u^3=0 \quad \text{in }\Omega,\qquad 
\gamma\nabla u\cdot\nu=g \quad \text{on }\partial\Omega,
\]
where $\gamma$ is an anisotropic conductivity tensor and $\alpha$ a nonlinear ionic response coefficient. The Neumann data $g$ represent pacing currents, and the boundary values $u|_{\partial\Omega}$ correspond to invasive voltage measurements. Ischemic regions are modeled by a subdomain $D\subset\Omega$ where $\gamma$ is piecewise constant.
We address the inverse problem of determining $\gamma$ from the Neumann-to-Dirichlet (NtD) map, assuming that $\alpha$ and $D$ are known. To our knowledge, uniqueness in the case of NtD data with anisotropic conductivities in this nonlinear setting has not been analyzed in previous work. Using a first-order linearization around a nontrivial pacing current, we prove uniqueness for $\gamma$.

\end{abstract}

\section{Introduction}

In this paper, we analyze a three-dimensional mathematical model stemming from cardiac electrophysiology, where the goal is to detect the conduction properties of ischemic or scarred regions in the heart from boundary measurements. Ischemia and post‑infarct scar alter the ionic properties and structural organization of the tissue, leading to impaired propagation of the electrical signal, conduction slowing and block, and potentially ventricular arrhythmia and fibrillation, ultimately causing death. In order to prevent this, electrophysiologists perform ablation procedures using pacing: a catheter delivers controlled electrical stimuli at selected sites of the endocardium, and the resulting electrograms are recorded to probe local conduction properties, \cite{pagani2021data,pagani2021computational}.

 Regions exhibiting conduction block or slowed propagation can thus be identified as arrhythmogenic substrates and targeted for ablation. Mathematically, this corresponds to prescribing boundary currents and observing the induced potentials, precisely the input–output setting which we can interpret within an electrical impedance tomography (EIT) framework to reconstruct the effective conductivity.
 At the scale of continuum models such as the monodomain equation, these conduction abnormalities are described through effective parameters that summarize the underlying ionic and structural changes. In the simplified setting considered here, we encode the effect of an ischemic or scarred region as a spatial variation of an effective conductivity tensor $\gamma$, while the nonlinear ionic response coefficient $\alpha$ will be assumed known. Thus, in what follows, a change in conductivity should be understood as an effective macroscopic description of altered propagation, not as a direct statement about microscopic ionic conductivities.
Here, the mathematical model we use to describe the electrical activity of the heart is the stationary version of the monodomain model. The full monodomain model is a nonlinear reaction–diffusion parabolic equation coupled with a system of ionic ODEs that accurately describes the spatio–temporal evolution of action potentials, \cite{quarteroni2019mathematical,trayanova2024computational,trayanova2024up,pagani2021data,pagani2021computational}. 
Physiologically, when the tissue is subjected to repeated external stimulation (pacing), transient oscillations subside and the potential distribution approaches a quasi-stationary state. In this regime, the full monodomain system can be reduced to an effective diffusion problem for the transmembrane potential, in which anisotropic conduction is encoded by the tensor $\gamma$ and the nonlinear ionic response is summarized by the coefficient $\alpha$. Freezing the time variable thus provides a meaningful approximation of the equilibrium reached under steady pacing, leading to the semilinear elliptic problem below, which isolates the effective conductive properties of the tissue \cite{franzone2014mathematical}.


The mathematical model we want to study reads as follows
\begin{equation}\label{nonlineareq}
    \left\{
    \begin{aligned}
      -\dive \left(\gamma \nabla u\right)+\alpha u^3=0  &  \mbox{ in }\Omega\\
      \gamma \nabla u\cdot \nu=g    & \mbox{ on }\partial\Omega
    \end{aligned}\right.
\end{equation}
with the following interpretations in electrophysiology:\\ $\Omega \subset \mathbb{R}^3$ denotes the portion of cardiac tissue under investigation (for instance, a patch of atrial or ventricular myocardium),
 $u(x)$ represents the \emph{transmembrane potential}, i.e., the difference between intracellular and extracellular electric potential at a point $x\in\Omega$. In the stationary setting, $u$ describes the equilibrium potential distribution under steady pacing,  $\gamma$ is the \emph{anisotropic conductivity tensor} of the tissue, modeling the preferential propagation of current along myocardial fibers versus across them,  $\alpha$ is a \emph{nonlinear ionic response coefficient}, summarizing the contribution of ionic currents through the membrane. The cubic term $\alpha u^3$ is a simplified nonlinear current--voltage relation encoding excitability effects.
The diffusion term $-\nabla\!\cdot(\gamma \nabla u)$ describes the propagation of the transmembrane potential through the tissue due to anisotropic conduction.
The Neumann boundary condition $\gamma \nabla u \cdot \nu = g$ on $\partial\Omega$ prescribes the applied current density at the boundary, where $\nu$ denotes the unit outward normal. In practice, $g$ corresponds to the stimulus delivered by pacing electrodes in invasive electrophysiological procedures. The associated Neumann-to-Dirichlet map, which sends $g$ to the resulting trace $u|_{\partial\Omega}$, mathematically models the measurements obtained in invasive EIT.

In our model, the presence of an ischemic or scarred region $D \subset \Omega$ is represented by a piecewise constant effective conductivity tensor
\begin{equation}\label{gammaint}
\gamma(x)=\gamma_0\chi_{\Omega\setminus D}(x)+\gamma_1\chi_{D}(x),
\end{equation}
where $\gamma_i$ are constant symmetric positive definite matrices for $i=0,1$. In other words, we model the arrhythmogenic substrate as a subdomain in which the effective conduction properties differ from the surrounding healthy tissue. The ionic response coefficient $\alpha$ is assumed to be a bounded and positive function.

Although this is a simplified stationary model compared to the full time-dependent description, it is mathematically significant, as it already exhibits the main analytical difficulties of the nonlinear inverse boundary value problem we want to address.

In practice, the geometry of ischemic or scarred regions is identified in advance by imaging, see for example \cite{paddock2021clinical}, whereas their effective conduction properties cannot be measured directly and must be inferred from pacing responses. We therefore assume that the inclusion $D$ is known and focus on recovering the anisotropic conductivity from knowledge of the NtoD map.

We recall that the identification of $D$ itself from boundary measurements has been addressed in related works. In \cite{beretta2016asymptotic,beretta2017reconstruction}, some of the present authors studied the recovery of a small inclusion $D$ with constant conductivity $k$, with $0 < k \ll 1$, embedded in a homogeneous background of unit conductivity, while in \cite{beretta2018detection} the case of inclusions of arbitrary size was analyzed.
In  principle, the recovery of $D$ is also possible in the case of anisotropic conductivities treated here as pointed out in Section 4.5 and will be topic of future work.


A nowadays well-established technique based on linearization was introduced in the 1990s for recovering the potential in semilinear equations from knowledge of the Dirichlet-to-Neumann map. The central idea is to exploit the smooth dependence of the DtoN map on perturbations of the boundary data and to analyze its first-order linearization \cite{isakov1994global}. Since then, this approach has been extended to more general potentials through higher-order linearization around homogeneous Dirichlet boundary data (cf.\cite{lassas2021inverse}, the review paper  \cite{lassas2025introduction} and references therein). More recently, Harrach and Lin \cite{harrach2023simultaneous} showed that this method allows the simultaneous recovery of piecewise analytic isotropic conductivities and potentials of the form 
$a(x,u)$, which are piecewise analytic in 
$x$ and analytic in 
$u$, from the nonlinear DtN map under suitable smoothness assumptions on the boundary data. 

To the best of our knowledge, no analogous results have been obtained in the case of Neumann-to-Dirichlet (NtD) data corresponding to anisotropic conductivities.
In this paper, we obtain a result of uniqueness of the tensor $\gamma$ of the form \eqref{gammaint} (assuming $D$ and $\alpha$  known)
from the NtoD map via a first order linearization around a nontrivial datum $g_0$. 

This corresponds to a clinically relevant pacing current that produces a nontrivial potential
$u_0\neq 0$ solution to \eqref{nonlineareq} with $g=g_0$, and the linearized equation
\begin{equation}\label{linearizedeq}
-\nabla\!\cdot(\gamma\nabla v)+3\alpha u_0^2 v=0
\end{equation}
already contains both the conductivity $\gamma$ and the nonlinear coefficient
$\alpha$ at first order. 
The main obstruction in determining both $\gamma$ and $\alpha$ in the linearized equation
\eqref{linearizedeq} is the dependence of the potential $q:=3\alpha u_0$ on $\gamma$.
The linear problem has been studied in \cite{Al-dH-G} in the case where the potential $q=0$.

Open issues of interest that we plan
to investigate in future work are stability, reconstruction algorithms, and simultaneous
recovery of $\gamma$ and $\alpha$. We note that, in the isotropic case, a simultaneous
reconstruction of $\gamma$ and $\alpha$ is in fact possible by using higher-order
linearizations of the NtoD map around the trivial solution, as shown for the DtoN map in~\cite{harrach2023simultaneous}. However, this approach is not meaningful in the
applications we have in mind, where nontrivial stimuli are used during the treatment of
patients, and the trivial background state is therefore not physically relevant.


The structure of the paper is as follows: in Section 2, we investigate the forward problem and determine some crucial estimates for the solution to \eqref{nonlineareq}.
 In Section 3 we establish some key estimates for the solution $u_0$ of \eqref{nonlineareq} with nontrivial datum $g_0\geq 0$ that allow us to compute the Fr\'echet derivative of the NtoD map. In Section 4, we prove that the NtoD map uniquely determines both $\gamma$. Finally, Section 5 contains some extensions and remarks.


\section{Well posedness of the direct problem}

\begin{definition}\label{regbordo}
Let $\Omega$ be a bounded domain in $\mathbb{R}^3$. We say that $\partial \Omega$ is of class $C^{1,1}$ if for any $P\in \partial \Omega$ there exists a rigid transformation of coordinates under which we have $P=0$ and 
\begin{equation*}
    \Omega \cap B_{r_0} =\{x\in B_{r_0} | x_n > \varphi(x')  \}
\end{equation*}
where $\varphi$ is a $C^{1,1}$ function on $B'_{r_0}$ satisfying 
\begin{equation*}
\varphi(0)=|\nabla \varphi(0)|=0. 
\end{equation*}
\end{definition}

Let $D\subset\subset \Omega$ such that
both $\partial \Omega$ and $\partial D$ are of class $C^{1,1}$.

Let $\gamma\in L^{\infty}(\Omega,\textrm{Sym}_3)$,
where we denote with {\textrm{Sym}$_3$} the class of $3\times 3$ symmetric real valued matrices. 
In particular let us assume that $\gamma$ is 
piecewise constant,  that is 
\begin{equation}\label{gamma}
\gamma(x)=\gamma_0\chi_{\Omega\setminus D}+\gamma_1\chi_{D}
\end{equation}
where $\gamma_i, \ i=0,1$ are constant matrices in {\textrm{Sym}$_3$}.
Let also assume that there is a positive number $\lambda_0\leq 1$ such that $\gamma_i$ satisfies the ellipticity condition 
\begin{equation}\label{ell}
    \lambda_0|\xi|^2\leq \gamma_i \xi \cdot \xi\leq \lambda_0^{-1}|\xi|^2 \quad\mbox{for every}\quad \xi\in\mathbb{R}^3\quad\mbox{and for}\quad i=0,1.
\end{equation}
Let $\alpha(x)\in L^\infty(\Omega)$ such that, for some  $0<\alpha_0<1$, 
\begin{equation}\label{hpalpha}
    \alpha_0\leq\alpha(x)\leq \alpha_0^{-1} \mbox{ for }x\in \Omega.
\end{equation}

Given $g\in H^{-1/2}\left(\partial\Omega\right)$, let us consider the Neumann boundary value problem
\begin{equation}\label{PN}
    \left\{
    \begin{aligned}
      -\dive \left(\gamma \nabla u\right)+\alpha u^3=0  &  \mbox{ in }\Omega\\
      \gamma \nabla u\cdot \nu=g    & \mbox{ on }\partial\Omega.
    \end{aligned}\right.
\end{equation}
Let us first show that problem \eqref{PN} is well posed.
Since we state the result for the weak form of the problem, it is sufficient to assume that $\Omega$ has Lipschitz boundary.
\begin{proposition}\label{wellposedness}
Let $\Omega$ be a bounded Lipschitz domain and let $\gamma$ be given by \eqref{gamma} and satisfying \eqref{ell}, and $\alpha\in L^{\infty}(\Omega)$ satisfying  \eqref{hpalpha}.
Then for every $g\in H^{-1/2}\left(\partial \Omega\right)$ there is a unique solution to the Neumann boundary value problem \eqref{PN}.
Moreover, 
\begin{equation}\label{wellpos}
    \|u\|_{H^1(\Omega)}\leq \frac{1}{\lambda_0}\left(\|g\|_{H^{-1/2}(\partial\Omega)}+\left(\frac{|\Omega|}{\alpha_0}\right)^{1/3}\|g\|^{1/3}_{H^{-1/2}(\partial\Omega)}\right).
\end{equation}
\end{proposition}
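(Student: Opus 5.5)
Existence and uniqueness will come from a variational (monotone operator) argument, and the estimate \eqref{wellpos} from testing the equation with $u$ itself. Since $H^1(\Omega)\hookrightarrow L^6(\Omega)\subset L^4(\Omega)$ in dimension three, the functional
\[
J(v)=\frac12\int_\Omega \gamma\nabla v\cdot\nabla v\,dx+\frac14\int_\Omega \alpha v^4\,dx-\langle g,v\rangle_{H^{-1/2},H^{1/2}}
\]
is well defined on $H^1(\Omega)$. Its Euler--Lagrange equation is exactly the weak formulation of \eqref{PN}:
\[
\int_\Omega\gamma\nabla u\cdot\nabla\varphi+\int_\Omega\alpha u^3\varphi=\langle g,\varphi\rangle \quad\forall\varphi\in H^1(\Omega).
\]
The functional $J$ is strictly convex, because the quadratic part is convex and $t\mapsto t^4$ is strictly convex with $\alpha\ge\alpha_0>0$. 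It is also continuous and weakly lower semicontinuous. The main point is coercivity on all of $H^1$, which must come from the quartic term, since the gradient term controls nothing on constants. I would handle this by splitting $v=\bar v+w$, where $\bar v$ is the mean of $v$. Poincaré--Wirtinger gives control of $w$ by $\|\nabla v\|_{L^2}$. For the mean, Hölder gives $|\Omega|^{-3}\big(\int|v|\big)^4\le\int v^4$, so $\int\alpha v^4\ge\alpha_0|\Omega|^{-3}\|v\|_{L^1}^4$, which controls $|\bar v|$. Together these show that $J(v)\to\infty$ as $\|v\|_{H^1}\to\infty$, because the linear term grows only like $\|v\|_{H^1}$. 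A minimizer therefore exists. Uniqueness of weak solutions follows from strict monotonicity: subtracting the equations for two solutions and testing with $u_1-u_2$ gives
\[
\int\gamma\nabla(u_1-u_2)\cdot\nabla(u_1-u_2)+\int\alpha(u_1^3-u_2^3)(u_1-u_2)=0,
\]
and since $(a^3-b^3)(a-b)\ge0$ with equality only if $a=b$, we get $u_1=u_2$.

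For the estimate I would test with $\varphi=u$, which gives
\[
\lambda_0\|\nabla u\|_{L^2}^2+\alpha_0\|u\|_{L^4}^4\le \|g\|_{H^{-1/2}}\|u\|_{H^{1/2}(\partial\Omega)}\le\|g\|_{H^{-1/2}}\|u\|_{H^1},
\]
taking the trace norm on $H^{1/2}(\partial\Omega)$ as the quotient norm so that the trace constant is $1$. The $L^2$ part of $\|u\|_{H^1}$ comes from Hölder: $\|u\|_{L^2}^2\le|\Omega|^{1/2}\|u\|_{L^4}^2$. Writing $X=\|u\|_{H^1}$ and $G=\|g\|_{H^{-1/2}}$, both $\|\nabla u\|_2^2\le GX/\lambda_0$ and $\|u\|_4^4\le GX/\alpha_0$ hold. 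Hence
\[
X^2\le \frac{GX}{\lambda_0}+|\Omega|^{1/2}\Big(\frac{GX}{\alpha_0}\Big)^{1/2}.
\]
This gives either $X\le 2G/\lambda_0$ or $X^{3/2}\le 2|\Omega|^{1/2}(G/\alpha_0)^{1/2}$, that is $X\lesssim(|\Omega|/\alpha_0)^{1/3}G^{1/3}$. Summing the two cases yields an estimate of the form \eqref{wellpos}.

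I expect the main obstacle to be matching the exact constants in \eqref{wellpos}: the factor $1/\lambda_0$ in front of both terms, and the precise power $(|\Omega|/\alpha_0)^{1/3}G^{1/3}$. Handling the cases crudely produces extra numerical factors such as $2$ or $2^{2/3}$. To remove them I would absorb carefully, using $\lambda_0\le1$ and $\alpha_0<1$, and possibly estimate the mean part separately via $|\bar u|\le|\Omega|^{-1/4}\|u\|_{L^4}$ instead of the full $L^2$ norm. If the sharp constants still fail to come out, I would accept a generic constant depending only on these quantities.
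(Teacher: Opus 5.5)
Your existence and uniqueness argument is correct and is essentially the paper's. The paper also minimizes the same energy by the direct method, and it proves uniqueness by testing the difference of the two equations with $u_1-u_2$, using $u_1^2+u_1u_2+u_2^2\ge 0$. There are two small differences. The paper gets coercivity directly from $\int_\Omega u^4\ge |\Omega|^{-1}\|u\|_{L^2}^4$, which controls the whole $L^2$ norm, so your mean/oscillation splitting and Poincar\'e--Wirtinger are unnecessary. Your convexity-plus-continuity route to weak lower semicontinuity is cleaner than the paper's compactness argument.

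The gap is in the estimate. Your intermediate inequality
\[
X^2\le \frac{G}{\lambda_0}\,X+\Big(\frac{|\Omega|\,G}{\alpha_0}\Big)^{1/2}X^{1/2}
\]
is exactly the one the paper obtains. The case split you then perform bounds the right-hand side by twice its larger term, so it only yields $X\le 2G/\lambda_0+2^{2/3}(|\Omega|/\alpha_0)^{1/3}G^{1/3}$. This is strictly weaker than \eqref{wellpos}. None of the remedies you list removes the loss, because the factors $2$ and $2^{2/3}$ come from the max-splitting itself, not from $\lambda_0$, $\alpha_0$ or the treatment of the mean. Falling back on a generic constant would prove a different statement.

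The missing step is elementary. Rewrite the inequality as $X^{1/2}\big(X-G/\lambda_0\big)\le (|\Omega|G/\alpha_0)^{1/2}$. Either $X\le G/\lambda_0$, or $X>G/\lambda_0$. In the second case, since $X^{1/2}\ge (X-G/\lambda_0)^{1/2}$, you get $(X-G/\lambda_0)^{3/2}\le (|\Omega|G/\alpha_0)^{1/2}$. In both cases $X\le G/\lambda_0+(|\Omega|/\alpha_0)^{1/3}G^{1/3}$. Then $\lambda_0\le 1$ lets you put $1/\lambda_0$ in front of both terms, which gives \eqref{wellpos} with exactly the stated constants.
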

Here and in the following, we will denote by
\begin{equation}
   \label{solu}
   u(\cdot,g)
\end{equation}
the unique solution of problem \eqref{PN}

\begin{proof}
A weak solution of problem \eqref{PN} is a function $u\in H^1(\Omega)$ such that, for every test function $\phi\in H^1(\Omega)$ 
\begin{equation}\label{weak}
    \int_\Omega \gamma\nabla u \cdot\nabla \phi+\int_\Omega \alpha u^3\phi=<g,\phi>_{\partial\Omega}
\end{equation}
where $<\cdot,\cdot>_{\partial\Omega}$ is the duality pairing in $H^{-1/2}(\partial\Omega)$.

Let us define the energy functional 
$E: H^1(\Omega)\rightarrow\mathbb{R}$ given by
    \begin{equation*}
    E(u)=\frac{1}{2}\int_\Omega \gamma\nabla u\cdot\nabla u +\frac{1}{4}\int_\Omega \alpha u^4-<g,u>_{\partial\Omega}.
\end{equation*}
The weak formulation \eqref{weak} corresponds to finding critical point of $E(u)$. Let us show that $E$ has a minimizer in $H^1(\Omega)$ (which will be a critical point and, hence a weak solution of problem \eqref{PN}) using the direct method of calculus of variations, i.e. showing that 
\begin{enumerate}[a)]
\item $E(u)$ is coercive, i.e.
\begin{equation*}
    E(u)\rightarrow+\infty \mbox{ as }\|u\|_{H^1(\Omega)}\rightarrow +\infty;
\end{equation*}
    \item $E(u)$ is lower semicontinuous;
    \item A minimizing sequence $\{\tilde {u}_n\}$ converges to some $u\in H^1(\Omega)$;
    \item The limit $u$ is indeed a minimizer for $E(u)$, hence a weak solution for \eqref{PN}.
\end{enumerate}
Let us start with step a). By assumptions \eqref{ell} and \eqref{hpalpha} we have
\begin{equation*}
    E(u)\geq \frac{\lambda_0}{2}\int_\Omega|\nabla u|^2+\frac{\alpha_0}{4}\int_\Omega u^4-<g,u>_{\partial\Omega}
\end{equation*}
Now, by Schwarz inequality it follows,
\begin{equation}\label{Jen}
    \int_\Omega u^4\geq \frac{1}{|\Omega|}\left(\int_\Omega u^2\right)^2=\frac{1}{|\Omega|}\|u\|^4_{L^2(\Omega)},
\end{equation}
and
\begin{equation*}
   \left| <g,u>_{\partial\Omega}\right|\leq \|g\|_{H^{-1/2}(\partial\Omega)}\|u\|_{H^{1/2}(\partial\Omega)}\leq \|g\|_{H^{-1/2}(\partial\Omega)}\|u\|_{H^1(\Omega)},
\end{equation*}
hence, by elementary calculations,
\begin{align*}
    E(u)&\geq \frac{\lambda_0}{2}\|\nabla u\|^2_{L^2(\Omega)}+\frac{\alpha_0}{4|\Omega|}\|u\|^4_{L^2(\Omega)}-\|g\|_{H^{-1/2}(\partial\Omega)}\|u\|_{H^1(\Omega)}\nonumber\\
    &\geq \frac{\lambda_0}{2}\|u\|^2_{H^1(\Omega)}+\frac{\alpha_0}{4|\Omega|}\|u\|^4_{L^2(\Omega)}-\frac{\lambda_0}{2}\| u\|^2_{L^2(\Omega)}-\|g\|_{H^{-1/2}(\partial\Omega)}\|u\|_{H^1(\Omega)}\nonumber\\
    &\geq \frac{\lambda_0}{2}\| u\|^2_{H^1(\Omega)}-\frac{\lambda_0^2|\Omega|}{4\alpha_0}-\|g\|_{H^{-1/2}(\partial\Omega)}\|u\|_{H^1(\Omega)}
\end{align*}
which proves the coercivity of $E(u)$ since
\begin{equation*}
    \frac{E(u)}{\| u\|_{H^1(\Omega)}}\geq \frac{\lambda_0}{2}\| u\|_{H^1(\Omega)}-\frac{\lambda_0^2|\Omega|}{4\alpha_0}\frac{1}{\| u\|_{H^1(\Omega)}}-\|g\|_{H^{-1/2}(\partial\Omega)}\rightarrow+\infty
\end{equation*}
if 
\begin{equation*}
   \|u\|_{H^1}\rightarrow +\infty. 
\end{equation*}
Let us now show that $E(u)$ is weakly lower continuous. Let $u_n$ be a sequence of functions  such that $u_n\rightharpoonup u$ in $H^1(\Omega)$. This implies that 
\begin{equation*}
   \|u\|_{H^1(\Omega)}\leq \liminf_{n\to+\infty} \|u_n\|_{H^1(\Omega)}
\end{equation*}
and, by a compactness argument, by possibly considering a suitable subsequence,
\begin{equation*}
    \|u_n\|_{L^4(\Omega)}\rightarrow\|u\|_{L^4(\Omega)}.
\end{equation*}
Moreover, since the trace operator $Tr: H^1(\Omega)\rightarrow H^{1/2}(\partial\Omega)$ is linear and continuous, we also have that
\begin{equation*}
    Tr(u_n)\rightarrow Tr(u)\mbox{ in }H^{1/2}(\partial\Omega),
    \end{equation*}
hence
\begin{equation}\label{semicont}
    E(u)\leq \liminf_{n\to+\infty} E(u_n).
\end{equation}

Now, by coercivity of $E$, there exist a positive constant $M$ such that 
\begin{equation*}
    E(u)\geq -M\mbox{ for every }u\in H^1(\Omega),
\end{equation*}
hence 
\begin{equation*}
   \tilde{M}=\inf_{u\in H^1(\partial\Omega)} E(u)\geq -M.
\end{equation*}
Given a minimizing sequence $\tilde{u}_n$ such that
\begin{equation*}
   E(\tilde{u}_n)\rightarrow\tilde{M} \mbox{ for }n\to+\infty,
\end{equation*}
since $E(\tilde{u}_n)$ is bounded, by coercivity, the sequence $\tilde{u}_n$ is bounded in $H^1(\Omega)$ norm and, up to subsequences 
\begin{equation*}
    \tilde{u}_n\rightharpoonup u \mbox{ in }H^1(\Omega)
\end{equation*}
By \eqref{semicont}, $E(u)$ is lower semicontinuous and hence, we have that
\begin{equation*}
    E(u)\leq \liminf_{n\to+\infty} E(\tilde{u}_n)=  \lim_{n\to+\infty} E(\tilde{u}_n)=\tilde{M}
\end{equation*}
so that $u$ is a minimum point for $E(u)$ and, hence, it satisfies equation \eqref{weak}.

Let us now show that this solution is unique. Let $u_1$ and $u_2$ be two solutions of \eqref{PN} and let $w=u_1-u_2$. Then $w\in H^1(\Omega)$ solves
\begin{equation*}
    \left\{
    \begin{aligned}
      -\dive \left(\gamma \nabla w\right)+\alpha w (u_1^2+u_1u_2+u_2^2)=0  &  \mbox{ in }\Omega\\
      \gamma \nabla w\cdot \nu=0    & \mbox{ on }\partial\Omega
    \end{aligned}\right.
\end{equation*}
which gives
\begin{equation*}
    \int_\Omega \gamma\nabla w\cdot\nabla w+\int_\Omega \alpha q w^2=0,
\end{equation*}
where 
\begin{equation*}
    q= (u_1^2+u_1u_2+u_2^2)\geq 0.
\end{equation*}
By \eqref{ell}, 
\begin{equation*}
    \lambda_0\int_\Omega |\nabla w|^2+\alpha_0\int_\Omega q w^2\leq 0,
\end{equation*}
from which we get
\begin{equation*}
    \int_\Omega |\nabla w|^2=0 \mbox{ and } \int_\Omega q w^2=0,
\end{equation*}
Which finally give $w=0$ almost everywhere in $\Omega$.

Let us choose $\phi=u$ in \eqref{weak} and, by \eqref{ell},\eqref{hpalpha}  we have
\begin{equation*}
    \lambda_0\|\nabla u\|_{L^2(\Omega)}^2+\frac{\alpha_0}{|\Omega|}\int_\Omega u^4\leq \|g\|_{H^{-1/2}(\partial\Omega)}\|u\|_{H^1(\Omega)}
\end{equation*}
which gives
\begin{equation*}
  \|\nabla u\|_{L^2(\Omega)}^2\leq \frac{1}{\lambda_0} \|g\|_{H^{-1/2}(\partial\Omega)}\|u\|_{H^1(\Omega)} 
\end{equation*}
and, by \eqref{Jen}
\begin{equation*}
    \|u\|_{L^2(\Omega)}^4\leq \frac{|\Omega|}{\alpha_0}\|g\|_{H^{-1/2}(\partial\Omega)}\|u\|_{H^1(\Omega)}
\end{equation*}
so that 
\begin{equation*}
  \|u\|_{H^1(\Omega)}^2\leq \frac{1}{\lambda_0} \|g\|_{H^{-1/2}(\partial\Omega)}\|u\|_{H^1(\Omega)} +
  \left(\frac{|\Omega|}{\alpha_0}\|g\|_{H^{-1/2}(\partial\Omega)}\right)^{1/2}\|u\|_{H^1(\Omega)}^{1/2}
\end{equation*}
that can be written as
\begin{equation}\label{st2}
  \|u\|_{H^1(\Omega)}^{1/2})\left(\|u\|_{H^1(\Omega)}-\frac{1}{\lambda_0} \|g\|_{H^{-1/2}(\partial\Omega)}\right)\leq
  \left(\frac{|\Omega|}{\alpha_0}\right)^{1/2}\|g\|_{H^{-1/2}(\partial\Omega)}^{1/2}.
\end{equation}
From \eqref{st2} we have that either
\begin{equation*}
\|u\|_{H^1(\Omega)}\leq\frac{1}{\lambda_0} \|g\|_{H^{-1/2}(\partial\Omega)},
\end{equation*}
otherwise
\begin{equation*}
 \left(\|u\|_{H^1(\Omega)}-\frac{1}{\lambda_0} \|g\|_{H^{-1/2}(\partial\Omega)}\right)^{3/2}\leq\|u\|_{H^1(\Omega)}^{1/2}\left(\|u\|_{H^1(\Omega)}-\frac{1}{\lambda_0} \|g\|_{H^{-1/2}(\partial\Omega)}\right)\leq
\left(\frac{|\Omega|}{\alpha_0}\right)^{1/2}\|g\|_{H^{-1/2}(\partial\Omega)}^{1/2}.
\end{equation*}
In either case we have \eqref{wellpos}.
\end{proof}
\begin{definition}[Neumann-to-Dirichlet map]\label{def:NDmap}
The Neumann to Dirichlet map (NtoD) corresponding to $\gamma$ in
\eqref{PN},
\[
\mathcal{N}_{NL}^{\gamma,\alpha} \colon H^{-1/2}(\partial\Omega) \longrightarrow H^{1/2}(\partial\Omega),
\]
is the map $g\in H^{-1/2}(\partial\Omega) \rightarrow u_{|{\partial\Omega}}\in H^{1/2}(\partial\Omega)$ where $u_{|{\partial\Omega}}$ is the trace of the unique weak solution $u\in H^1(\Omega)$ of the Neumann problem \eqref{PN} .
In other words, by using notation \eqref{solu}
\begin{equation*}
\mathcal{N}_{NL}^{\gamma,\alpha} (g)=u(\cdot,g)_{|{\partial\Omega}} 
\end{equation*}
\end{definition}


In order to discuss the properties of the Neumann-to-Dirichlet map $\mathcal{N}_{NL}^{\gamma,\alpha}$  and to address the problem of uniqueness in the inverse problem (see the following sections), it is necessary to deal with more regular solutions; in particular, $L^{\infty}$ estimates of the solutions will play a crucial role. As we show below, this can be achieved by assuming more regularity of the domains $\Omega$ and $D$ and on the Neumann datum $g$.

Preliminarily, we need to construct a regular function that meets the Neumann boundary condition.

\begin{lemma}\label{probw}
Let $\Omega$, $D$, be defined as in the problem \eqref{PN}. Suppose further that $\partial\Omega$ and $\partial D$ are of class $C^{1,1}$ and that $g\in W^{1-\frac{1}{p}}_p (\partial\Omega)$ with $p>3$.
Then, there exists a function $w$ defined in
$\Omega$ (and vanishing on $D$), satisfying the boundary condition
\begin{equation}\label{bdw}
    \gamma \nabla w\cdot\mathbf{\nu}=g,
\end{equation}
and  such that
\begin{equation}\label{stimw1}
\|w\|_{L^{\infty}(\Omega)}\le 
   C_1 \|g\|_{W^{1-\frac{1}{p}}_p(\partial\Omega)}
\end{equation}
where the constant $C_1$ depends on $\lambda_0$, and $\Omega\backslash D$.

Moreover, by defining
\begin{equation}\label{deff}
    f:=\mathrm{div}\big (\gamma \nabla w\big )\,,
\end{equation}
we also have
\begin{equation}\label{stimw2}
\|f\|_{L^{\infty}(\Omega)}\le 
   C_2 \|g\|_{W^{1-\frac{1}{p}}_p(\partial\Omega)}
\end{equation}
where the constant $C_2$  depends on $\lambda_0$, $\Omega\backslash D$ and on 
$\mathrm{dist}$$\bigl\{\partial\Omega, \partial D\bigr\}$.
\end{lemma}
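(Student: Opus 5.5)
We construct $w$ as a cutoff of the solution of an auxiliary linear Neumann problem with constant coefficient $\gamma_0$ in a collar of $\partial\Omega$, and read off both estimates from elliptic $W^{2,p}$ regularity together with the Sobolev embedding for $p>3$.

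Set $d=\mathrm{dist}\{\partial\Omega,\partial D\}>0$ and let $\Omega_d=\{x\in\Omega:\ \mathrm{dist}(x,\partial\Omega)<d/2\}$. Near $\partial\Omega$ we have $\gamma\equiv\gamma_0$, a constant symmetric positive definite matrix. First solve the linear problem
\[
-\dive(\gamma_0\nabla z)+z=0 \ \mbox{ in }\Omega,\qquad \gamma_0\nabla z\cdot\nu=g \ \mbox{ on }\partial\Omega,
\]
posed with the constant coefficient $\gamma_0$ on all of $\Omega$, so that $D$ plays no role and no compatibility condition is needed thanks to the zero-order term. Since $\partial\Omega\in C^{1,1}$, the coefficients are constant and $g\in W^{1-1/p}_p(\partial\Omega)$, the $L^p$ theory for oblique/conormal derivative problems (Agmon--Douglis--Nirenberg, or Grisvard for $C^{1,1}$ domains) gives
\[
\|z\|_{W^{2,p}(\Omega)}\le C(\lambda_0,\Omega)\,\|g\|_{W^{1-\frac1p}_p(\partial\Omega)}.
\]
Because $p>3$, the embedding $W^{2,p}(\Omega)\hookrightarrow C^{1}(\overline\Omega)$ bounds both $z$ and $\nabla z$ in $L^\infty$ by the same right-hand side.

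Next take a cutoff $\eta\in C^\infty(\overline\Omega)$ with $\eta\equiv1$ in a neighbourhood of $\partial\Omega$, $\eta\equiv0$ outside $\Omega_d$ (in particular on a neighbourhood of $\overline D$), $0\le\eta\le1$, and $|\nabla\eta|\le C/d$, $|D^2\eta|\le C/d^2$. Define $w=\eta z$. Then $w$ vanishes on $D$. On $\partial\Omega$ we have $\eta=1$ and $\nabla\eta=0$, so $\gamma\nabla w\cdot\nu=\gamma_0\nabla z\cdot\nu=g$, which is \eqref{bdw}. Moreover $\|w\|_{L^\infty}\le\|z\|_{L^\infty}$, giving \eqref{stimw1} with no dependence on $d$. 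Since $w$ is supported where $\gamma=\gamma_0$, we get
\[
f=\dive(\gamma_0\nabla w)=\eta\,\dive(\gamma_0\nabla z)+2\gamma_0\nabla\eta\cdot\nabla z+z\,\dive(\gamma_0\nabla\eta).
\]
Using the equation, the first term equals $\eta z$. The whole expression is bounded in $L^\infty$ by $C(1+d^{-1}+d^{-2})\|z\|_{C^1}$, which yields \eqref{stimw2}, with the dependence on $\mathrm{dist}\{\partial\Omega,\partial D\}$ entering only through the cutoff. Note that $\dive(\gamma_0\nabla z)$ is only in $L^p$ a priori, but here it equals $z\in L^\infty$, and this is exactly why the zero-order term is included.

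The main obstacle is justifying the $W^{2,p}$ estimate for the Neumann problem up to a $C^{1,1}$ boundary with data in the trace space $W^{1-1/p}_p(\partial\Omega)$. I would invoke the standard result, e.g. Grisvard, Thm.~2.4.2.7, after the linear change of variables $x\mapsto\gamma_0^{-1/2}x$, which reduces the operator to the Laplacian and keeps the domain $C^{1,1}$ with constants depending on $\lambda_0$. The rest is bookkeeping. If one insists that $C_1$ depend on $\Omega\setminus D$ rather than $\Omega$, the same construction can be run in the collar domain, but using $\Omega$ is simpler.
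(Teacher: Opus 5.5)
Your proof is correct and follows the paper's strategy. Both arguments solve an auxiliary constant-coefficient problem with Neumann datum $g$, get $W^2_p$ regularity from Grisvard, embed into $C^{1,1-3/p}$ using $p>3$, multiply by a cutoff equal to $1$ near $\partial\Omega$ and vanishing near $\overline D$, and bound $f$ via the product rule. The only difference is the auxiliary problem: the paper solves $\dive(\gamma_0\nabla w_0)=0$ in $\Omega\setminus D$ with $\gamma_0\nabla w_0\cdot\nu=g$ on $\partial\Omega$ and $w_0=0$ on $\partial D$, whereas you add a zero-order term on all of $\Omega$ to avoid the compatibility condition; this produces the harmless extra term $\eta z$ in $f$, makes $C_1$ independent of $D$, and needs only $\partial\Omega\in C^{1,1}$.
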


\begin{proof}
By known results (see e.g. Grisvard th. $2.4.2.6$) the unique solution $w_0$ of the mixed problem  

\begin{equation*}
    \left\{
    \begin{aligned}
      \dive \left(\gamma_0 \nabla w_0\right)=0  &  \mbox{ in }\Omega\backslash D\\
      \gamma_0 \nabla w_0\cdot \nu=g    & \mbox{ on }\partial\Omega\\
      w_0=0    & \mbox{ on }\partial D
    \end{aligned}\right.
\end{equation*}

satisfies $w_0\in W^{2}_p(\Omega\backslash D)$ and its norm is bounded by the norm of $g$ in $W^{1-\frac{1}{p}}_p$. On the other hand, by Sobolev imbedding,

\[
W^{2}_p(\Omega\backslash D)\subset C^{1,\alpha}
(\overline{\Omega\backslash D})
\]

with continuous immersion and $\alpha=1-\frac{3}{p}>0$. We now take a smooth function $0\le \varphi\le 1$ with support in $\mathbb{R}^3\backslash {D}$ and such that
$\varphi=1$ in a neighborhood of $\partial\Omega$
and define

\begin{equation*}
w:=\varphi\,w_0
\end{equation*}

Clearly, $w\in C^{1,\alpha}
(\overline{\Omega})$, 
supp $w\subset \Omega\backslash D$, and the boundary condition \eqref{bdw} holds. Furthermore,

\begin{equation*}
\dive \left(\gamma \nabla w\right)=\dive \left(\gamma_0 \nabla w\right)=
\gamma_0\nabla w_0\cdot\nabla\varphi+
\nabla w_0\cdot \gamma_0\nabla\varphi+
w_0\dive \left(\gamma_0 \nabla \varphi\right).
\end{equation*}

Now, it is readily checked that 
$\dive \left(\gamma \nabla w\right)\in C^{0,\alpha}(\overline{\Omega})$
and that the estimates \eqref{stimw1}, \eqref{stimw2} are valid.
\end{proof}
We can now prove the $L^{\infty}$ bound of the weak solution to the problem \eqref{PN}:
\begin{proposition}\label{bound}
Let the assumptions of Proposition \ref{wellposedness} hold. Suppose further that $g\in W^{1-\frac{1}{p}}_p(\partial\Omega)$ with $p>3$ and that $\partial\Omega$, $\partial D$ are of class $C^{1,1}$.
Then the solution $u$ of the problem \eqref{PN} satisfies the following bound:
\begin{equation}\label{linfty}
    \|u\|_{L^{\infty}(\Omega)}\leq 2C_1\|g\|_{W^{1-\frac{1}{p}}_p(\partial\Omega)}+\left(\frac{C_2}{\alpha_0}\right)^{1/3}\|g\|^{1/3}_{W^{1-\frac{1}{p}}_p(\partial\Omega)}
\end{equation}

where the constants $C_1$, $C_2$ are the same as in Lemma \ref{probw}.
\end{proposition}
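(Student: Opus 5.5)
Set $v:=u-w$, with $w$ the function of Lemma \ref{probw}. Then $v$ has homogeneous conormal data and satisfies, in weak form,
\[
\int_\Omega \gamma\nabla v\cdot\nabla\phi+\int_\Omega \alpha u^3\phi=\int_\Omega f\phi \qquad\text{for every }\phi\in H^1(\Omega),
\]
where $f=\dive(\gamma\nabla w)$ is the function in \eqref{deff}. Indeed, $\int_\Omega\gamma\nabla w\cdot\nabla\phi=\langle g,\phi\rangle_{\partial\Omega}-\int_\Omega f\phi$, and $w\in C^{1,\alpha}(\overline\Omega)$ with $\dive(\gamma\nabla w)\in L^\infty$; no jump terms arise across $\partial D$ because $w$ vanishes near $D$.

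The main step is a Stampacchia truncation (weak maximum principle). Set
\[
k:=C_1\|g\|_{W^{1-\frac1p}_p(\partial\Omega)}+\left(\frac{C_2}{\alpha_0}\|g\|_{W^{1-\frac1p}_p(\partial\Omega)}\right)^{1/3}.
\]
Test with $\phi=(v-k)^+\in H^1(\Omega)$, which is admissible since we are in the Neumann setting and there is no boundary constraint. This gives
\[
\int_{\{v>k\}}\gamma\nabla v\cdot\nabla v+\int_{\{v>k\}}\big(\alpha u^3-f\big)(v-k)=0 .
\]
On $\{v>k\}$ we have $u=v+w>k-\|w\|_\infty\ge (C_2\|g\|/\alpha_0)^{1/3}$ by \eqref{stimw1}. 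Hence $\alpha u^3\ge \alpha_0 u^3\ge C_2\|g\|\ge f$ by \eqref{stimw2}, so the second integrand is nonnegative. The first term is then forced to vanish, so $\nabla(v-k)^+=0$ and $(v-k)^+$ is constant. If that constant were positive, then $v>k$ everywhere and the second integral would be strictly positive unless $\alpha u^3=f$ a.e., which is impossible since $\alpha_0u^3>C_2\|g\|\ge f$ when the inequality is strict. To avoid this borderline case, I would run the argument with $k+\varepsilon$ in place of $k$ and let $\varepsilon\to0$. We conclude $v\le k$ a.e. The symmetric argument with $\phi=-(v+k)^-$, i.e. testing on $\{v<-k\}$, gives $v\ge -k$.

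Finally, $\|u\|_\infty\le\|v\|_\infty+\|w\|_\infty\le k+C_1\|g\|$, which is exactly \eqref{linfty}. The main delicacy is the choice of the truncation level $k$. It must absorb both the shift by $w$, so that $u$ rather than $v$ is large on the truncation set, and the size of $f$ through the monotone cubic term; this is what produces the factor $2C_1$. One must also check that $\alpha u^3$ lies in a space pairing with $H^1$, which holds since $u\in H^1\subset L^6$ gives $u^3\in L^2$.
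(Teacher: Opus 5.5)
Your proof is correct and follows essentially the paper's route: the same splitting $u=v+w$ with $w$ from Lemma~\ref{probw}, and a Stampacchia truncation of $v$ at a level that absorbs both $\|w\|_{L^\infty}$ and $(\|f\|_{L^\infty}/\alpha_0)^{1/3}$. The only differences are cosmetic: the paper uses a general $C^1$ truncation $G$ and the factorization $(v+w)^3-\beta^3=(v+w-\beta)\,d$, whereas you use $(v-k)^+$ with a pointwise sign check. Your $\varepsilon$-shift is also unnecessary, since on $\{v>k\}$ you already have $u>(C_2\|g\|/\alpha_0)^{1/3}$ strictly; hence $(\alpha u^3-f)(v-k)>0$ there, and the vanishing of that integral gives $|\{v>k\}|=0$ directly.
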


\begin{proof}  
Let us decompose the unique solution $u\in H^1(\Omega)$ to problem \eqref{PN} as follows:
\begin{equation*}
    u=v+w,
\end{equation*}
where $w$ is provided by Lemma \ref{probw}.

Then, by recalling the definition \eqref{deff}, $v$ (weakly) solves the problem
\begin{equation*}
    \left\{
    \begin{aligned}
      -\dive \left(\gamma \nabla v\right)+\alpha (v+w)^3=f  &  \mbox{ in }\Omega\\
      \gamma \nabla v\cdot \nu=0    & \mbox{ on }\partial\Omega
    \end{aligned}\right.
\end{equation*}
with $f\in C^{0,\alpha}(\overline{\Omega})$ defined in \eqref{deff}.
We provide a maximum principle for $v$ by an adaptation to the above nonlinear problem of Stampacchia's truncation method (see e.g. Brezis). For any $\phi\in H^1(\Omega)$ we have
\begin{equation}\label{PNvw}
   \int_{\Omega}\gamma\nabla v\nabla \phi+ 
   \int_{\Omega}\alpha (v+w)^3\phi=\int_{\Omega}f\phi
\end{equation}
Fix $G\in C^1(\mathbb{R})$ such that:
$G'(s)\le M$, $G$ is strictly increasing on $(0,+\infty)$,
$G(s)= 0$ $\forall s\le 0$.

Take now $\phi=G(v-|w_-|_{\infty}-\beta)$ in \eqref{PNvw},
where $|w_{\pm}|_{\infty}$ denote the $L^{\infty}$ norm of the positive and negative parts of $w$ and $\beta\in\mathbb{R}$. Hence, we get

\begin{align*}
    \int_{\Omega}G'(v-|w_-|_{\infty}-\beta)\gamma\nabla v\cdot \nabla v+
    & \int_{\Omega}\alpha (v+w)^3 G(v-|w_-|_{\infty}-\beta)\nonumber\\
    & =\int_{\Omega}f \,G(v-|w_-|_{\infty}-\beta)
\end{align*}

 Subtracting the term

 \[
\beta^3 \int_{\Omega}\alpha G(v-|w_-|_{\infty}-\beta)
 \]
  
to both members we obtain

\begin{align}\label{stamp1}
    \int_{\Omega}G'(v-|w_-|_{\infty}-\beta)\gamma\nabla v\cdot \nabla v+
& \int_{\Omega}\alpha\big [ (v+w)^3 -\beta^3\big ]G(v-|w_-|_{\infty}-\beta)\nonumber\\
    & =\int_{\Omega}\big (f-\beta^3 \alpha\big )  G(v-|w_-|_{\infty}-\beta)
\end{align}

Now by choosing

\begin{equation*}
   \beta=\left (\frac{|f_+|_{\infty}}{\alpha_0} \right)^{1/3}\,,
\end{equation*}
the right hand side of \eqref{stamp1} is $\le 0$.
Hence, the same must be true for the second term at the left hand side, since 
\begin{equation*}
   \gamma \nabla v\cdot \nabla v\ge \lambda_0 |\nabla v|^2
\end{equation*}
and $G'\ge 0$ in the first term. Then we get
\begin{equation*}
    \int_{\Omega}\alpha(v+w-\beta)d\, G(v-|w_-|_{\infty}-\beta) \le 0\,,
\end{equation*}
where
\[
d=d(v,w,\beta):=(v+w)^2+\beta(v+w)+\beta^2
\]
is strictly positive in $\Omega$. We can write the last inequality in the form
\begin{equation*}
    \int_{\Omega}\alpha(v-|w_-|_{\infty}-\beta)d\,
    G(v-|w_-|_{\infty}-\beta)+
    \int_{\Omega}\alpha(w+|w_-|_{\infty})
    d\, G(v-|w_-|_{\infty}-\beta)
    \le 0
\end{equation*}
Since the second integral is non negative, we finally get
\begin{equation*}
\int_{\Omega}\alpha\, d\,(v-|w_-|_{\infty}-\beta)
     G(v-|w_-|_{\infty}-\beta)
    \le 0
\end{equation*}
By the above inequality and by $t\,G(t)\ge 0$, with the strict inequality for $t>0$, we conclude
\begin{equation*}
v \le |w_-|_{\infty}+\beta=
|w_-|_{\infty}+\left(\frac{|f_+|_{\infty}}{\alpha_0} \right)^{1/3}
\end{equation*}

a.e. in $\Omega$. Then, the following bound holds for the (weak) solution $u$ of \eqref{PN}
\begin{equation*}
u=v+w \le |w_+|_{\infty}+
|w_-|_{\infty}+\left(\frac{|f_+|_{\infty}}{\alpha_0} \right)^{1/3}
\end{equation*}

In order to get an analogous lower bound, we change sign to both sides of \eqref{PNvw} and repeat the previous calculations by replacing 
$w\to -w$, $f\to -f$. Hence we obtain
\begin{equation*}
u=v+w \ge -|w_-|_{\infty}-
|w_*|_{\infty}-\left(\frac{|f_-|_{\infty}}{\alpha_0} \right)^{1/3}
\end{equation*}

From the two estimates above we get

\begin{equation*}
|u|_{\infty}\le 2|w|_{\infty}
+\left(\frac{|f|_{\infty}}{\alpha_0} \right)^{1/3}
\end{equation*}

Now the Proposition follows readily from the bounds \eqref{stimw1}, \eqref{stimw2}.
\end{proof}
\begin{remark}\label{gcont}
    By Sobolev embedding, the assumption $g\in W^{1-\frac{1}{p}}_p(\partial\Omega)$ with $p>3$ implies that $g\in C^{0,\delta}(\partial\Omega)$, where $\delta = 1-\frac{3}{p}>0$.
\end{remark}

\begin{remark}\label{locHol}
We stress that the previous estimates hold for any weak solution of problem \eqref{PN} with Neumann datum in $W_p^{1-\frac{1}{p}}$. Besides, it follows by the same bounds that the solution is locally H\"older continuous in $\Omega$.
In fact, we can now consider $u$ as a (weak) solution of the inhomogeneous elliptic equation
\[
\dive \left(\gamma \nabla u\right)=\alpha u^3
\]
where the right hand side is essentially bounded in $\Omega$ by Proposition \ref{bound}. Hence, by the known regularity results of De Giorgi-Nash, we can bound the local
$C^{0,\delta}$ norm of $u$ (for suitable $\delta>0$) with
$\|g\|_{W^{1-\frac{1}{p}}_p}(\partial\Omega)$
\end{remark}

We now show that the solution of $\eqref{PN}$ is actually a Holder continuous function in $\overline{\Omega}$.
\begin{proposition}\label{regularity}
With the assumptions of Proposition \ref{bound},
    there exist $0<\delta<1$ and $C>0$, depending on $\lambda_0$, $\alpha_0$, $\Omega$, $D$ and on $\|g\|_{W^{1-\frac{1}{p}}_p(\partial\Omega)}$ such that if $u$ is the solution of \eqref{PN}, then
    \begin{equation}\label{regest}
        \|u\|_{C^{0,\delta}\left(\overline{\Omega}\right)}\leq C \end{equation}
\end{proposition}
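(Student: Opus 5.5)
We reduce global Hölder continuity to an interior estimate plus a boundary estimate near $\partial\Omega$. By Proposition \ref{bound}, $u\in L^\infty(\Omega)$ with a bound depending only on $\lambda_0,\alpha_0,C_1,C_2$ and $\|g\|_{W^{1-\frac1p}_p(\partial\Omega)}$. Hence $F:=\alpha u^3$ is bounded, with
\[
\|F\|_{L^\infty(\Omega)}\le \alpha_0^{-1}\|u\|^3_{L^\infty(\Omega)}.
\]
We then regard $u$ as a weak solution of the \emph{linear} Neumann problem
\[
-\dive(\gamma\nabla u)=-F \ \text{in }\Omega,\qquad \gamma\nabla u\cdot\nu=g \ \text{on }\partial\Omega,
\]
whose coefficient $\gamma$ is merely bounded and measurable, and uniformly elliptic by \eqref{ell}. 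Everything then reduces to linear De Giorgi--Nash--Moser theory.

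Since $\mathrm{dist}(\partial D,\partial\Omega)=d_0>0$, we cover $\overline\Omega$ by two overlapping pieces.

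\textbf{Interior piece.} Take $\Omega_{d_0/4}=\{x\in\Omega:\mathrm{dist}(x,\partial\Omega)>d_0/4\}$. Remark \ref{locHol} (De Giorgi--Nash with bounded right-hand side) gives
\[
\|u\|_{C^{0,\delta_1}(\overline{\Omega_{d_0/4}})}\le C\left(\|u\|_{L^\infty(\Omega)}+\|F\|_{L^\infty(\Omega)}\right),
\]
with the constant depending on $\lambda_0$ and $d_0$. This step is unaffected by the jump of $\gamma$ across $\partial D$, since De Giorgi--Nash only needs measurable coefficients.

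\textbf{Boundary piece.} In the collar $U=\{x\in\Omega:\mathrm{dist}(x,\partial\Omega)<d_0/2\}$ we have $\gamma\equiv\gamma_0$, which is constant. Two routes are available here.

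\emph{Route (a).} Use the decomposition $u=v+w$ from Proposition \ref{bound}. By Lemma \ref{probw}, $w\in C^{1,\alpha}(\overline\Omega)$, and $v$ solves $-\dive(\gamma_0\nabla v)=f-\alpha u^3\in L^\infty$ in $U$ with homogeneous conormal condition on $\partial\Omega$. We flatten the $C^{1,1}$ boundary via Definition \ref{regbordo}; the transformed coefficient matrix is Lipschitz and still uniformly elliptic. Even reflection across the flat boundary, which is legitimate for conormal data, turns $v$ into a weak solution of a divergence-form equation with bounded measurable elliptic coefficients and bounded right-hand side in a full ball. Interior De Giorgi--Nash applied to the reflected function gives a $C^{0,\delta_2}$ bound up to $\partial\Omega$.

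\emph{Route (b).} Since $\gamma_0$ is constant and $\partial\Omega\in C^{1,1}$, apply $W^{2}_q$ regularity for the Neumann problem (Grisvard) in $U$ to $\chi v$, for a cutoff $\chi$. This gives $v\in W^2_q$ locally up to the boundary for every $q<\infty$, hence $C^{1,\beta}$.

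Either route yields a Hölder bound in $\overline{U}\cap\{\mathrm{dist}(x,\partial\Omega)<d_0/4\}$, depending on $\lambda_0,\alpha_0,\Omega,D$ and $\|g\|$. I prefer route (b) when it is cleanly citable, because it avoids checking the reflection argument.

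\textbf{Patching.} Set $\delta=\min(\delta_1,\delta_2)$. For $x,y\in\overline\Omega$:
\begin{itemize}
\item if both points lie in one of the two overlapping regions, use that region's estimate;
\item otherwise $|x-y|\ge d_0/4$, and
\[
|u(x)-u(y)|\le 2\|u\|_{L^\infty(\Omega)}\le 2\|u\|_{L^\infty(\Omega)}\,(4/d_0)^\delta|x-y|^\delta .
\]
\end{itemize}
This gives \eqref{regest}.

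The main obstacle is the boundary step: justifying Hölder continuity up to $\partial\Omega$ for a Neumann problem. The reflection argument must be carried out with the conormal (non-flat) geometry correctly transformed, or else one must cite the appropriate $W^2_q$ theory. The interior discontinuity of $\gamma$ causes no difficulty, because it is confined away from $\partial\Omega$.
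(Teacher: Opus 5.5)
Your proposal is correct and is essentially the paper's argument: De Giorgi--Nash (Remark~\ref{locHol}) on an interior region containing $D$, then a cutoff near $\partial\Omega$, where $\gamma\equiv\gamma_0$, so that Neumann regularity on the $C^{1,1}$ domain applies, followed by patching. The paper's boundary step uses $\chi u$ with $g\in H^{1/2}(\partial\Omega)$, $H^2$ regularity and $H^2\subset C^{0,1/2}$; this is your route (b) with $q=2$, and it suffices without reaching every $q<\infty$, which would need a bootstrap since $\gamma_0\nabla\chi\cdot\nabla v$ is a priori only in $L^2$. One bookkeeping fix: as written, your regions $\{\mathrm{dist}(x,\partial\Omega)\ge d_0/4\}$ and $\{\mathrm{dist}(x,\partial\Omega)<d_0/4\}$ do not overlap, so the alternative $|x-y|\ge d_0/4$ fails for nearby points on opposite sides of the interface; take the boundary estimate on $\{\mathrm{dist}(x,\partial\Omega)\le d_0/2\}$ instead (legitimate since $\gamma=\gamma_0$ up to distance $d_0$ from $\partial\Omega$), and your patching then goes through as stated.
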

\begin{proof}
    Let $D\subset\Omega'\subset\subset\Omega$. By remark \ref{locHol}, the estimate \eqref{regest} holds on $\overline{\Omega}'$ with a constant $C'$ also depending on $\Omega'$. Now, pick a smooth function $\chi$ with support in $\mathbb{R}^3\backslash {D}$ and such that $\chi=1$ in a neighborhood of $\Omega\backslash\Omega'$ and define

\begin{equation*}
\tilde u:=\chi\,u
\end{equation*}

Clearly, $\tilde u\in H^1(\Omega)$ and vanishes on $D$. Then, by calculations as in the proof of Lemma \ref{probw}, 

\begin{equation}\label{equtilde}
\dive(\gamma\nabla\tilde u)=
\chi\alpha u^3+
u\dive(\gamma_0\nabla\chi)+\gamma_0\nabla\chi\cdot\nabla u+
\gamma_0\nabla u\cdot\nabla\chi
\end{equation}

Note that the first two terms at the right hand side are bounded (by Proposition \ref{bound}) while the other terms belong to $L^2(\Omega)$. Hence, $\dive(\gamma\nabla\tilde u)\in L^2(\Omega)$; moreover, $\tilde u$ has the same Neumann datum $g$ as $u$. Again by Sobolev imbedding, $g\in H^{1/2}(\partial\Omega)$ (actually, $g\in H^{s}(\partial\Omega)$ with $s=\frac{1}{2}+\frac{3(p-2)}{2p}$), so that we conclude
$\tilde u\in H^2(\Omega)$ and the same is true for $u$ in $\Omega\backslash\Omega'$. Now, since $H^2(\Omega\backslash\Omega')\subset C^{0,1/2}(\Omega\backslash\Omega')$, we easily get \eqref{regest} in $\Omega\backslash\Omega'$, with another constant $C''$ (depending on $\Omega\backslash\Omega'$ and on $D$) and where $\delta=1/2$. It remains to show that \eqref{regest} holds in the whole $\Omega$ with a constant $C$ as in the statement of the theorem.  We first note that such an estimate holds on $D$ by the previous arguments relying on the regularity results for the elliptic equations in divergence form. Let us now consider
an open cover of $\Omega\backslash D$; by suitably choosing $\Omega'$, we can consider an open set $\mathcal{U}$ of the cover
as a subset of $\Omega'$ or of $\Omega\backslash\Omega'$. In the former case, we have the bound\eqref{regest} on $\mathcal{U}$ with a constant $C'$ from the remark \ref{locHol}; in the latter case, the bound holds with a constant $C''$ calculated as above in terms of the $H^2$ norm of $u$ in a neighborhood of $\partial\Omega$ that includes $\mathcal{U}$. We further note that $C'$ is uniformly bounded if the
 subsets $\Omega'$ remain at a finite distance from $\partial\Omega$, while $C''$ is uniformly bounded if $\Omega\backslash \Omega'$ remain at a finite distance from $\partial D$. It follows that 
$\sup\big (\min\{C', C''\}\big )<\infty$ where the supremum is taken on any family of neighborhoods covering $\Omega\backslash D$
(we set $C''=\infty$ for the neighborhoods intersecting $\partial D$ and $C'=\infty$ for those intersecting $\partial\Omega$). Taking $C$ as the maximum between this supremum and the constant of the bound on $D$, the proposition is proved.
\end{proof}

\begin{remark}\label{uw2p}
    By iterating the arguments following \eqref{equtilde}, one can show that $u\in W^2_p(\Omega\backslash\Omega')$ for any $\Omega'$ such that $D\subset \Omega'\subset\subset \Omega$.
\end{remark}

\section{Linearization of the Neumann to Dirichlet map}

In this section, we prepare the nonlinear inverse problem of determining the anisotropic conductivity by passing from the nonlinear NtoD map
$\mathcal N^{\gamma,\alpha}_{NL}$ to its first-order linearization.
The key idea is to perturb a fixed non negative and non trivial  smooth enough Neumann datum $g_{0}$, 
analyze the behaviour of the corresponding solutions $u(\cdot,g_{0}+\tau g^{*})$ as $\tau \to 0$, 
and identify the Fr\'echet derivative of $\mathcal N^{\gamma,\alpha}_{NL}$.
To do this, we first establish some preliminary results providing uniform lower bounds for the solutions of the nonlinear problem corresponding to nonnegative Neumann data.
We then show that this derivative coincides with the NtoD map associated with the 
linear Schr\"odinger-type operator
\begin{equation}\label{L}
L  = -\operatorname{div}(\gamma \nabla \cdot) + 3\alpha u_0^{2} ,
\end{equation}
where $u_0:=u(\cdot,g_0)$ is the solution to \eqref{PN} corresponding to a nontrivial background current $g_{0}\geq $.
This reduction is essential: equality of the nonlinear maps for two conductivities 
implies equality of their linearised maps, which in turn allows us to apply 
boundary determination and unique continuation arguments in the subsequent sections.


Let us first show that, if 
\begin{equation}\label{hpg0}
g_0\in W^{1-\frac{1}{p}}_p(\partial\Omega)\subset C^0(\partial\Omega),\quad g_0\ge 0 \quad\mbox{ and }\quad g_0\not\equiv 0 \mbox{ on }\partial\Omega,
\end{equation}
then $u_0$ is strictly positive in $\overline{\Omega}$.

\begin{proposition}\label{defc0}
    With the assumptions of Proposition \ref{bound}, let $g_0$ be as in \eqref{hpg0}. Then, there exists $c_0>0$ such that
    $u_0=u(\cdot,g_0)\geq c_0\mbox{ in }\Omega$.
\end{proposition}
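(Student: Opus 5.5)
The argument has three steps: show $u_0\ge 0$, then $u_0\not\equiv 0$, then upgrade to a uniform positive lower bound on $\overline{\Omega}$ using the continuity from Proposition \ref{regularity}.

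\emph{Nonnegativity.} Test the weak formulation \eqref{weak} (with $g=g_0$) with $\phi=u_0^-=\max\{-u_0,0\}\in H^1(\Omega)$. Since $\nabla u_0\cdot\nabla u_0^-=-|\nabla u_0^-|^2$ and $u_0^3u_0^-=-(u_0^-)^4$, we get
\[
-\int_\Omega\gamma\nabla u_0^-\cdot\nabla u_0^- -\int_\Omega\alpha (u_0^-)^4=\langle g_0,u_0^-\rangle_{\partial\Omega}\ge 0,
\]
because $g_0\ge0$ is continuous (Remark \ref{gcont}), so the pairing is an integral of a nonnegative function. By \eqref{ell} and \eqref{hpalpha} this forces $u_0^-=0$, i.e.\ $u_0\ge 0$.

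\emph{Nontriviality.} Taking $\phi\equiv1$ gives $\int_\Omega\alpha u_0^3=\int_{\partial\Omega}g_0>0$, so $u_0\not\equiv0$.

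\emph{Strict positivity.} Write the equation as a linear one, $-\dive(\gamma\nabla u_0)+c(x)u_0=0$ with $c=\alpha u_0^2$. By Proposition \ref{bound}, $0\le c\in L^\infty(\Omega)$, and $u_0\ge0$ is a weak supersolution. Apply the weak Harnack inequality (De Giorgi--Nash--Moser, Trudinger; it only needs bounded measurable coefficients, so the jump of $\gamma$ across $\partial D$ is harmless) on balls $B_{2r}\subset\Omega$: $\inf_{B_r}u_0\ge C\,r^{-3}\int_{B_{2r}}u_0$. A chaining argument over overlapping balls in the connected set $\Omega$ then gives $u_0>0$ locally uniformly in $\Omega$: if $u_0$ had a zero in the interior, the infimum would vanish on every ball in a chain, so $u_0\equiv0$, contradicting nontriviality. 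This yields a positive lower bound on each compact $\overline{\Omega'}\subset\Omega$.

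\emph{Near $\partial\Omega$ (the main obstacle).} By Proposition \ref{regularity}, $u_0\in C^{0,\delta}(\overline\Omega)$, so it suffices to exclude $u_0(x_0)=0$ at some $x_0\in\partial\Omega$. Near $\partial\Omega$ we are away from $D$, so $\gamma=\gamma_0$ is constant. By Remark \ref{uw2p}, $u_0\in W^2_p$ in a neighbourhood of $\partial\Omega$, hence $C^1$ up to the boundary. The $C^{1,1}$ boundary satisfies the interior ball condition. Since $u_0>0$ inside and $-\dive(\gamma_0\nabla u_0)+cu_0=0$ with $c\ge0$, Hopf's lemma at a boundary minimum $u_0(x_0)=0$ gives $\gamma_0\nabla u_0\cdot\nu(x_0)<0$. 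This contradicts $\gamma_0\nabla u_0\cdot\nu=g_0(x_0)\ge0$.

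Applying Hopf in this setting is where care is needed. It requires the strong form of the Neumann condition, which holds because $u_0$ is $C^1$ near $\partial\Omega$. It also requires the version of Hopf's lemma with a zeroth-order term $c\ge0$ at a point where the minimum value is $0$; that version is standard. Hence $u_0>0$ on the compact set $\overline\Omega$, and continuity gives $c_0=\min_{\overline\Omega}u_0>0$.
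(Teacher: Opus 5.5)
Your proof is correct and takes essentially the paper's route: an interior strong minimum principle, followed by Hopf's lemma at $\partial\Omega$ using the $W^2_p$ regularity of Remark~\ref{uw2p}, with continuity on $\overline{\Omega}$ then giving $c_0=\min_{\overline{\Omega}}u_0>0$; you derive the interior principle from the weak Harnack inequality plus connectedness, which is the content of the GT Theorem~8.19 the paper cites. Your preliminary steps ($u_0\ge 0$ by testing with $u_0^-$, and $u_0\not\equiv 0$ by testing with $\phi\equiv 1$) make explicit what the paper handles by ruling out a non-positive minimum of a non-constant solution, and they have the small advantage that Hopf's lemma is only needed at a zero minimum, where the sign of $c$ plays no role.
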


\begin{proof}
Let us write the equation for $u_0$ as
\begin{equation*}
    \dive( \gamma\nabla u_0) - c(x)\, u_0=0
\end{equation*}
where 
\begin{equation*}
    c(x):=\alpha u_0^2\geq 0\mbox{ in }\Omega.
\end{equation*}
Notice that, by Proposition \ref{regularity}, $u_0$ is continuous in $\overline{\Omega}$ and therefore $c$ is non negative and bounded.

Then, $u_0$ is a (weak) solution of a linear elliptic equation satisfying the strong maximum and minimum principle in $H^1$
(see Theorem 8.19 in \cite{GT} and the subsequent remarks). It follows in particular that
$u_0$ cannot have a non-positive minimum inside $\Omega$, because it cannot be constant since $g_0\not\equiv 0$. Moreover, since $u_0\in W^2_p$ in any neighborhood of $\partial\Omega$ contained in $\Omega\backslash D$  (see Remark \ref{uw2p})
and solves there the elliptic equation
\begin{equation*}
  \dive( \gamma_0\nabla u_0) - c(x)\, u_0=0
\end{equation*}
with $\nabla u_0\cdot\nu=g_0\ge 0$ on $\partial\Omega$, we can apply Hopf Lemma (Lemma 3.4 in \cite{GT}) to conclude that $u_0$ cannot have a non-positive minimum on $\partial\Omega$ as well. This means that $c_0=\min_{\Omega}u_0$ is strictly positive.\end{proof}

 With the same assumptions, one can also achieve some quantitative estimate on $c_0$ by exploiting the weak Harnack inequality and the regularity of a solution in a neighborhood of the boundary $\partial\Omega$.

\begin{remark}\label{solproblin}
From the proof of Propositions \ref{defc0}, it is clear that the same estimates from below hold for a weak solution $v$ of the linear elliptic problem
  \begin{equation}\label{LN}
    \left\{
    \begin{aligned}
      -\dive \left(\gamma \nabla v\right)+c(x)\,v=0  &  \mbox{ in }\Omega\\
      \gamma \nabla v\cdot \nu=g^*    & \mbox{ on }\partial\Omega
    \end{aligned}\right.
\end{equation}
where $c(x)\ge 0$ is bounded ($c\neq 0$), $g^*$ is smooth enough and $g^*>0$. Note that condition $c\neq 0$ ensures the unique solvability of the above problem in $H^1(\Omega)$ by the Fredholm alternative
(see, e.g. \cite{GT}, eq. (8.77) and the subsequent discussion). Moreover, the $L^{\infty}$ estimates and the regularity properties of the solution  obtained in the previous section can also be easily proved for the solution $v$
of \eqref{LN}.
\end{remark}

We now state and prove some preliminary lemmas in the spirit of $\cite{isakov1994global}$

\begin{lemma}\label{lemma1}
    Let $g^*\in H^{-1/2}(\partial\Omega)$, let $g_0$ satisfy \eqref{hpg0}, and $\tau\in\mathbb{R}$, then
    \begin{equation}\label{thlemma1}
    \lim_{\tau\to 0}\|u(\cdot,g_0+\tau g^*)-u(\cdot,g_0)\|_{H^1(\Omega)}=0
    \end{equation}
\end{lemma}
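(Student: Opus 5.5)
The plan is to subtract the weak formulations for the two data and then test the resulting identity with the difference itself, using the monotonicity of the map $s\mapsto s^3$. Set $u_\tau:=u(\cdot,g_0+\tau g^*)$, $u_0:=u(\cdot,g_0)$ and $w_\tau:=u_\tau-u_0\in H^1(\Omega)$. Subtracting the weak formulations \eqref{weak} for $u_\tau$ and $u_0$ gives, for every $\phi\in H^1(\Omega)$,
\begin{equation*}
\int_\Omega \gamma\nabla w_\tau\cdot\nabla\phi+\int_\Omega \alpha\,(u_\tau^3-u_0^3)\,\phi=\tau<g^*,\phi>_{\partial\Omega}.
\end{equation*}
Choosing $\phi=w_\tau$, and writing $u_\tau^3-u_0^3=w_\tau q_\tau$ with $q_\tau=u_\tau^2+u_\tau u_0+u_0^2\ge 0$ (as in the uniqueness part of Proposition \ref{wellposedness}), the left-hand side is bounded below by
\begin{equation*}
\lambda_0\|\nabla w_\tau\|^2_{L^2(\Omega)}+\alpha_0\int_\Omega q_\tau w_\tau^2 .
\end{equation*}
The right-hand side is at most $|\tau|\,\|g^*\|_{H^{-1/2}}\|w_\tau\|_{H^1}$.

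The main obstacle is that this controls only the gradient and a weighted $L^2$ norm of $w_\tau$. The Neumann problem has no Poincaré inequality, so we must control $\|w_\tau\|_{L^2}$ another way. I would use Proposition \ref{defc0}: since $q_\tau\ge \tfrac34 u_0^2\ge \tfrac34 c_0^2$ pointwise (because $a^2+ab+b^2\ge \tfrac34 b^2$), we obtain
\begin{equation*}
\alpha_0\int_\Omega q_\tau w_\tau^2\ge \tfrac34\alpha_0 c_0^2\|w_\tau\|^2_{L^2(\Omega)}.
\end{equation*}
Here the hypothesis \eqref{hpg0} on $g_0$ enters, through the regularity guaranteed by Proposition \ref{bound} and Proposition \ref{regularity}. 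This yields
\begin{equation*}
\min\{\lambda_0,\tfrac34\alpha_0c_0^2\}\,\|w_\tau\|_{H^1}^2\le |\tau|\,\|g^*\|_{H^{-1/2}}\|w_\tau\|_{H^1},
\end{equation*}
hence $\|w_\tau\|_{H^1}\le C|\tau|\,\|g^*\|_{H^{-1/2}}\to 0$. This is in fact a Lipschitz estimate, which is stronger than \eqref{thlemma1}.

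If one prefers to avoid the positivity of $u_0$, an alternative route is available. The estimate above gives $\|\nabla w_\tau\|_{L^2}\to 0$ and $\int_\Omega q_\tau w_\tau^2\to 0$. The uniform bound \eqref{wellpos} gives $H^1$-boundedness of $u_\tau$. One would then argue by compactness: along a subsequence $w_\tau$ converges strongly in $L^2$ to a constant $k$. Since $\int (u_\tau^3-u_0^3)w_\tau\to 0$ and $u_\tau\to u_0+k$ in $L^4$, it follows that $\int \alpha((u_0+k)^3-u_0^3)k=0$, which by monotonicity forces $k=0$. Uniqueness of the limit then gives convergence of the whole family. I would present the first, direct argument and mention the second only as a remark.
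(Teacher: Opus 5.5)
Your direct argument is the paper's proof. You test the difference of the weak formulations with $w_\tau$, bound $q_\tau=(u_\tau+\tfrac12 u_0)^2+\tfrac34 u_0^2\ge \tfrac34 c_0^2$ via Proposition \ref{defc0}, and obtain the same Lipschitz estimate $\|w_\tau\|_{H^1(\Omega)}\le \min\{\lambda_0,\tfrac34\alpha_0c_0^2\}^{-1}|\tau|\,\|g^*\|_{H^{-1/2}(\partial\Omega)}$; your compactness alternative is also sound and does not need the positivity of $u_0$ (so it works for any $g_0\in H^{-1/2}(\partial\Omega)$), but it gives only continuity, with no rate in $\tau$.
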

\begin{proof}
    Denote by $u_\tau=u(\cdot,g_0+\tau g^*)$ and $u_0=u(\cdot,g_0)$ and let us define
\begin{equation*}
    z=u_\tau-u_0.
\end{equation*}
Notice that 
\begin{equation}\label{bdz}
    \gamma \nabla z\cdot \nu=\tau g^* \mbox{ on }\partial\Omega
\end{equation}
and
\begin{equation}\label{eqz}
    \dive(\gamma\nabla z)=\alpha(u_\tau^3-u_0^3)= \alpha q_0 z, 
\end{equation}
where $q_0=(u_\tau^2+u_0^2+u_\tau u_0)$.
By multiplying \eqref{eqz} by $z$ and integrating by parts we have, by \eqref{bdz},
\begin{equation}\label{zparts}
    \int_\Omega \gamma\nabla z\cdot \nabla z+\int_\Omega \alpha q_0 z^2=\tau\int_{\partial\Omega}z g^*\leq |\tau|\|z\|_{H^1(\Omega)}\|g^*\|_{H^{-1/2}(\partial\Omega)}
\end{equation}
By Proposition \ref{defc0}, there exists a positive number $c_0$ 
\begin{align}
 \label{qzero}
    q_0&=u_\tau^2+u_0^2+u_\tau u_0= (u_\tau+\frac{1}{2}u_0)^2+\frac{3}{4}u_0^2\geq \frac{3}{4}c_0^2
\end{align}
By \eqref{ell}, \eqref{hpalpha} and \eqref{qzero}, the inequality \eqref{zparts} gives

\begin{equation*}
    \lambda_0\|\nabla z\|^2_{L^2(\Omega)}+\frac{3\alpha_0c_0^2}{4}\| z\|^2_{L^2(\Omega)}\leq |\tau|\|z\|_{H^1(\Omega)}\|g^*\|_{H^{-1/2}(\partial\Omega)}.
\end{equation*}
and, hence,
\begin{equation*}
    \|z\|_{H^1(\Omega)}\leq \frac{1}{\min\left\{\lambda_0, \frac{3\alpha_0c_0^2}{4}\right\}}|  \tau| \|g^*\|_{H^{-1/2}(\partial\Omega)}
\end{equation*}
which gives \eqref{thlemma1}.

\end{proof}
Let us now denote by $v_0$ the unique solution of the Neumann boundary value problem
\begin{equation}\label{PNL}
    \left\{\begin{aligned}
      -\dive \left(\gamma \nabla v_0\right)+3\alpha u_0^2v_0=0  &  \mbox{ in }\Omega\\
      \gamma \nabla v_0\cdot \nu=g^*    & \mbox{ on }\partial\Omega.
    \end{aligned}\right.
\end{equation}

This solution exists by Remark \ref{solproblin} (because $3\alpha u_0^2$ is bounded and larger than $3\alpha_0c_0^2>0$) and satisfies the estimate
\begin{equation*}
    \|v_0\|_{H^1(\Omega)}\leq C_0 \|g^*\|_{H^{-1/2}(\partial\Omega)}
\end{equation*} 
where $C_0$ also depends on $c_0$ and, hence, on $u_0$. By the same remark we infer that if $g^*\in W_p^{1-\frac{1}{p}}$ then $v_0$ is bounded and continuous in $\overline{\Omega}$.
\begin{lemma}\label{lemma2}
Let $g_0$, $g^*$, $u_0$, $u_\tau$ be given as in Lemma \ref{lemma1} and assume further that $g^*\in W_p^{1-\frac{1}{p}}$. 
Let $v_0$ be the solution of \eqref{PNL} with Neumann datum $g^*$. Then,
\begin{equation*}
    \lim_{\tau\to 0}\left\|\frac{u_\tau-u_0}{\tau}-v_0\right\|_{H^1(\Omega)}=0    
\end{equation*}
\end{lemma}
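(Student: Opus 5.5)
The approach is to set $z=u_\tau-u_0$ and $R_\tau:=z/\tau-v_0$, write down the equation $R_\tau$ satisfies, and test it against $R_\tau$ itself. Subtracting $\tau$ times \eqref{PNL} from the problem for $z$ in \eqref{eqz}--\eqref{bdz}, $R_\tau$ has homogeneous Neumann data $\gamma\nabla R_\tau\cdot\nu=0$ on $\partial\Omega$. Inside $\Omega$ it satisfies
\begin{equation*}
-\dive(\gamma\nabla R_\tau)+3\alpha u_0^2R_\tau=-\frac{\alpha}{\tau}\left(u_\tau^3-u_0^3-3u_0^2z\right)=-\frac{\alpha}{\tau}\left(3u_0z^2+z^3\right).
\end{equation*}
Testing the weak form with $R_\tau$ and using \eqref{ell}, \eqref{hpalpha} together with $3\alpha u_0^2\ge 3\alpha_0c_0^2$ from Proposition \ref{defc0} gives
\begin{equation*}
\min\{\lambda_0,3\alpha_0c_0^2\}\,\|R_\tau\|_{H^1(\Omega)}^2\le \frac{\alpha_0^{-1}}{|\tau|}\int_\Omega\left(3|u_0|z^2+|z|^3\right)|R_\tau|.
\end{equation*}
It therefore suffices to show that $\frac1{|\tau|}\|3u_0z^2+z^3\|_{(H^1)^*}\to0$.

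To bound the right-hand side, I would use Hölder's inequality together with the Sobolev embedding $H^1(\Omega)\subset L^6(\Omega)$ in dimension three. Since $u_0$ is bounded by Proposition \ref{bound},
\begin{equation*}
\int_\Omega|u_0|z^2|R_\tau|\le \|u_0\|_{L^\infty}\|z\|_{L^{12/5}}^2\|R_\tau\|_{L^6}\le C\|z\|_{H^1}^2\|R_\tau\|_{H^1},
\end{equation*}
and similarly
\begin{equation*}
\int_\Omega|z|^3|R_\tau|\le \|z\|_{L^{18/5}}^3\|R_\tau\|_{L^6}\le C\|z\|_{H^1}^3\|R_\tau\|_{H^1}.
\end{equation*}
The proof of Lemma \ref{lemma1} actually gives the quantitative bound $\|z\|_{H^1(\Omega)}\le C|\tau|\,\|g^*\|_{H^{-1/2}(\partial\Omega)}$. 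Substituting this and dividing by $\|R_\tau\|_{H^1}$ yields $\|R_\tau\|_{H^1}\le C(|\tau|+\tau^2)$, which tends to zero.

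The main obstacle is making sure the constants are uniform in $\tau$. The coercivity constant involves only $u_0$ (through $c_0$), not $u_\tau$, so that part is fine. The bound on $z$ from Lemma \ref{lemma1} relies on the lower bound $q_0\ge\frac34c_0^2$, which again uses only $u_0$. The one remaining point is justifying $z^3R_\tau\in L^1$ so that the test is legitimate. This holds because $z\in H^1\subset L^6$. Alternatively, since $g^*\in W^{1-\frac1p}_p$, Proposition \ref{bound} gives $u_\tau$ bounded uniformly for $|\tau|\le1$, which would even allow the simpler estimate $|3u_0z^2+z^3|\le C|z|^2$ in $L^2$ and the argument closes just as easily.
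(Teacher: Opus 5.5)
Your proof is correct, but it splits the remainder differently from the paper.

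\textbf{Your route.} You keep the fixed operator $L=-\dive(\gamma\nabla\cdot)+3\alpha u_0^2$ on the left. The full Taylor remainder $-\frac{\alpha}{\tau}(3u_0z^2+z^3)$ goes on the right. You control it with $H^1(\Omega)\subset L^6(\Omega)$ and the quantitative bound $\|z\|_{H^1}\le C|\tau|\,\|g^*\|_{H^{-1/2}}$ from the proof of Lemma \ref{lemma1}.

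\textbf{The paper's route.} It puts the $\tau$-dependent secant potential $\alpha q_0$ on the left, where $q_0=u_\tau^2+u_\tau u_0+u_0^2\ge\frac34c_0^2$. The right-hand side then becomes $\alpha v_0(u_\tau+2u_0)(u_\tau-u_0)$. This is only linear in $z$, and the paper bounds it by $\|\alpha v_0(u_\tau+2u_0)\|_{L^\infty}\|z\|_{L^2}$ before absorbing. That avoids Sobolev exponents entirely. However, it needs $v_0\in L^\infty$ and a bound on $\|u_\tau\|_{L^\infty}$ that is uniform in $\tau$. That is exactly why the paper assumes $g^*\in W^{1-\frac1p}_p$.

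\textbf{What yours buys.} Your main argument uses only $u_0\in L^\infty$ and $u_0\ge c_0$. So it works verbatim for every $g^*\in H^{-1/2}(\partial\Omega)$, and it gives
\[
\|R_\tau\|_{H^1}\le C\bigl(|\tau|\,\|g^*\|^2_{H^{-1/2}}+\tau^2\|g^*\|^3_{H^{-1/2}}\bigr),
\]
with $C$ depending on $g_0$ but not on $g^*$. This is genuine Fr\'echet differentiability of $g\mapsto u(\cdot,g)$ at $g_0$, as a map from $H^{-1/2}(\partial\Omega)$ to $H^1(\Omega)$. It also makes the density argument behind Corollary \ref{corder} unnecessary.
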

\begin{proof}
    Let us define
 \begin{equation*}
        w=\frac{u_\tau-u_0}{\tau}-v_0
    \end{equation*}
    and notice that 
\begin{equation*}
    \gamma \nabla w\cdot \nu= 0 \mbox{ on }\partial\Omega.
\end{equation*}

Moreover
\begin{equation}\label{eqv}
    \dive(\gamma\nabla w)-\alpha q_0 w =\alpha v_0 (u_\tau+2u_0)(u_\tau-u_0), 
\end{equation}    
where (see the proof of Lemma \ref{lemma1})
\begin{equation*}
    q_0=u_\tau^2+u_0u_\tau+u_0^2\geq \frac{3}{4}c_0^2
\end{equation*}
Furthermore, by our assumptions and by Proposition \ref{bound}, we have
\[
\|\alpha v_0 (u_\tau+2u_0)\|_{L^\infty(\Omega)}\le C
\]
with $C$ independent of $\tau$, so by multiplying the equation \eqref{eqv} by $w$ and after integration by parts, we get
\begin{align*}
    \int_\Omega \gamma\nabla w\cdot \nabla w+\int_\Omega \alpha q_0 w^2=&\int_{\Omega}\alpha v_0(u_\tau+2u_0)(u_0-u_\tau)w \nonumber\\
    &\leq \|\alpha v_0 (u_\tau+2u_0)\|_{L^\infty(\Omega)}\|u_0-u_\tau\|_{L^2(\Omega)}\|w\|_{L^2(\Omega)}
    \nonumber \\
    &\le C_\epsilon \|u_0-u_\tau\|^2_{L^2(\Omega)}
    +\epsilon \|w\|^2_{L^2(\Omega)}
\end{align*}
for any $\epsilon>0$. Hence, arguing as in the proof of the preceding lemma, we get the bound
\begin{equation*}
    \|w\|_{H^1(\Omega)}\le C\,\|u_0-u_\tau\|_{L^2(\Omega)}
\end{equation*}
so that the lemma follows by equation \eqref{thlemma1}.
\end{proof}
The result of Lemma \ref{lemma2} implies the following
\begin{theorem}\label{teoder}
Let $\Omega$ be a bounded domain, $D\subset\subset \Omega$ such that $\partial\Omega$, $\partial D$ are of class $C^{1,1}$. Let
$\gamma$ be given by \eqref{gamma} and satisfying \eqref{ell}, and $\alpha\in L^{\infty}(\Omega)$ satisfying  \eqref{hpalpha}.
Let $g^*\in W_p^{1-1/p}(\partial\Omega)$ and $g_0$ satisfy \eqref{hpg0}. 

Then
\begin{equation*}
   \lim_{\tau\to 0}\frac{\mathcal{N}^{\gamma,\alpha}_{NL} (g_0+\tau g^*)- \mathcal{N}^{\gamma,\alpha}_{NL} (g_0)}{\tau}=\mathcal{N}^{\gamma,3\alpha u_0^2}_{L} (g^*) 
\end{equation*}
where $\mathcal{N}^{\gamma,3\alpha u_0^2}_{L}$ is the Neumann to Dirichlet map associated to the operator $L$ defined in \eqref{L}.
\end{theorem}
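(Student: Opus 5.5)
The theorem follows from Lemma \ref{lemma2} together with the continuity of the trace operator $Tr\colon H^1(\Omega)\to H^{1/2}(\partial\Omega)$. By Definition \ref{def:NDmap},
\[
\frac{\mathcal{N}^{\gamma,\alpha}_{NL}(g_0+\tau g^*)-\mathcal{N}^{\gamma,\alpha}_{NL}(g_0)}{\tau}=Tr\Big(\frac{u_\tau-u_0}{\tau}\Big),
\]
where $u_\tau=u(\cdot,g_0+\tau g^*)$ and $u_0=u(\cdot,g_0)$.

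The first step is to identify the limit object. By Remark \ref{solproblin}, problem \eqref{PNL} is uniquely solvable in $H^1(\Omega)$: the potential $c=3\alpha u_0^2$ is bounded and satisfies $c\ge 3\alpha_0c_0^2>0$ by Proposition \ref{defc0}. Hence the Neumann-to-Dirichlet map of $L$ in \eqref{L} is well defined by $\mathcal{N}^{\gamma,3\alpha u_0^2}_{L}(g^*):=Tr(v_0)$, with $v_0$ the solution of \eqref{PNL}.

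Next I estimate
\[
\Big\|\frac{\mathcal{N}^{\gamma,\alpha}_{NL}(g_0+\tau g^*)-\mathcal{N}^{\gamma,\alpha}_{NL}(g_0)}{\tau}-\mathcal{N}^{\gamma,3\alpha u_0^2}_{L}(g^*)\Big\|_{H^{1/2}(\partial\Omega)}\le \|Tr\|\,\Big\|\frac{u_\tau-u_0}{\tau}-v_0\Big\|_{H^1(\Omega)}.
\]
Lemma \ref{lemma2} applies because $g^*\in W^{1-1/p}_p(\partial\Omega)$ and $g_0$ satisfies \eqref{hpg0}. It shows that the right-hand side tends to $0$ as $\tau\to0$, so the limit holds in the $H^{1/2}(\partial\Omega)$ norm, which is the claim.

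There is no real obstacle once Lemma \ref{lemma2} is granted. The one point to check is that its hypotheses hold for all small $|\tau|$. The perturbed data $g_0+\tau g^*$ may be sign-changing, but this is harmless: the argument only needs the lower bound $q_0\ge\frac34 c_0^2$, which uses positivity of $u_0$ alone. It also needs a uniform $L^\infty$ bound on $u_\tau$. This follows from Proposition \ref{bound}, since $\|g_0+\tau g^*\|_{W^{1-1/p}_p}$ stays bounded for $|\tau|\le1$.

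Strictly speaking this gives the Gâteaux derivative in the direction $g^*$. Fréchet differentiability, as the surrounding text suggests, would need the same estimate uniformly in $\|g^*\|\le1$. The proofs of Lemmas \ref{lemma1} and \ref{lemma2} give constants that depend only on $\|g^*\|_{W^{1-1/p}_p}$, so linearity of $g^*\mapsto v_0$ would give that as well.
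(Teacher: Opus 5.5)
Your proof is correct and is essentially the paper's own argument. The paper states the theorem as an immediate consequence of Lemma~\ref{lemma2}, leaving implicit the continuity of the trace $H^1(\Omega)\to H^{1/2}(\partial\Omega)$, which you make explicit; your checks that Lemma~\ref{lemma2} applies for small $|\tau|$ are also correct (the bound $q_0\ge \frac34 c_0^2$ uses only $u_0\ge c_0$, and Proposition~\ref{bound} gives $\tau$-uniform $L^\infty$ bounds on $u_\tau$).
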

In particular, Theorem \ref{teoder}, the density of $W^{1-1/p}_p(\partial\Omega)$ in $H^{-1/2}(\partial\Omega)$ (with respect to the $H^{-1/2}$-norm)
 and continuity of the linear NtoD map $\mathcal{N}^{\gamma,3\alpha u_0^2}_{L}$ imply the following
\begin{corollary}\label{corder}
The knowledge of $\mathcal{N}^{\gamma,\alpha}_{NL} (g)$ for every $g\in H^{-1/2}(\partial\Omega)$ implies the knowledge of 
$\mathcal{N}^{\gamma,3\alpha u_{0}^2}_{L} (g^*)
$ for every $g^*\in H^{-1/2}(\partial\Omega)$ and with $u_0=u(\cdot,g_0)$ for $g_0$  satisfying \eqref{hpg0}.\end{corollary}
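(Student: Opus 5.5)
The plan is to obtain Corollary~\ref{corder} in two stages. First we recover the linearized map on the smooth data $g^*\in W^{1-1/p}_p(\partial\Omega)$ directly from $\mathcal{N}^{\gamma,\alpha}_{NL}$ via Theorem~\ref{teoder}. Then we extend to all of $H^{-1/2}(\partial\Omega)$ by density and continuity.

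\emph{Step 1 (smooth data).} Fix $g_0$ satisfying \eqref{hpg0}. Knowledge of $\mathcal{N}^{\gamma,\alpha}_{NL}$ on all of $H^{-1/2}(\partial\Omega)$ gives the traces $\mathcal{N}^{\gamma,\alpha}_{NL}(g_0+\tau g^*)$ and $\mathcal{N}^{\gamma,\alpha}_{NL}(g_0)$ for every $\tau$. The difference quotients are therefore known. By Theorem~\ref{teoder}, which follows from Lemma~\ref{lemma2} together with continuity of the trace $H^1(\Omega)\to H^{1/2}(\partial\Omega)$, these quotients converge in $H^{1/2}(\partial\Omega)$ to $\mathcal{N}^{\gamma,3\alpha u_0^2}_{L}(g^*)$. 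So the linearized map is determined on $W^{1-1/p}_p(\partial\Omega)$. Note that $u_0$ itself need not be known; only the limit is used.

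\emph{Step 2 (continuity of the linear map).} Let $v$ solve \eqref{PNL} with datum $g^*\in H^{-1/2}$. Testing with $v$ and using \eqref{ell}, together with the lower bound $3\alpha u_0^2\ge 3\alpha_0c_0^2>0$ from Proposition~\ref{defc0}, gives
\[
\min\{\lambda_0,3\alpha_0c_0^2\}\,\|v\|_{H^1(\Omega)}^2\le \|g^*\|_{H^{-1/2}}\|v\|_{H^1(\Omega)}.
\]
Hence $\|\mathcal{N}^{\gamma,3\alpha u_0^2}_{L}(g^*)\|_{H^{1/2}}\le C\|g^*\|_{H^{-1/2}}$, so the map is bounded and linear from $H^{-1/2}$ to $H^{1/2}$.

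\emph{Step 3 (density).} $W^{1-1/p}_p(\partial\Omega)$ contains, for instance, Lipschitz functions on $\partial\Omega$, which are dense in $L^2(\partial\Omega)$. In turn, $L^2(\partial\Omega)$ is dense in $H^{-1/2}(\partial\Omega)$. Given $g^*\in H^{-1/2}$, choose $g_n^*\in W^{1-1/p}_p$ with $g_n^*\to g^*$ in $H^{-1/2}$. By Step 2, $\mathcal{N}_L(g_n^*)\to\mathcal{N}_L(g^*)$ in $H^{1/2}$, and each $\mathcal{N}_L(g_n^*)$ is known by Step 1. So the limit is determined.

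The only delicate point I expect is justifying this density claim, since $C^{1,1}$ boundaries make Lipschitz functions available. The rest is routine.
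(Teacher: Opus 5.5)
Your proposal is correct and follows the same route as the paper, which obtains the corollary in one line from Theorem~\ref{teoder}, the density of $W^{1-1/p}_p(\partial\Omega)$ in $H^{-1/2}(\partial\Omega)$, and the continuity of $\mathcal{N}^{\gamma,3\alpha u_0^2}_{L}$. You make those three ingredients explicit: the trace of the $H^1$ convergence from Lemma~\ref{lemma2}, the coercivity estimate using $u_0\ge c_0$ from Proposition~\ref{defc0}, and density through $L^2(\partial\Omega)$.
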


\section{Uniqueness of the inverse problem for the anisotropic case}
In the discussion of the inverse problem, we assume that the nonlinear ionic coefficient
$\alpha$ is known in $\Omega$, and we focus on the unique determination of the anisotropic
conductivity tensor $\gamma$ of the form~\eqref{gamma} from the NtoD map
\(N^{NL}_{\gamma,\alpha}\). 

In this section we need an additional geometric assumption. Namely, we will require that both $\partial \Omega$ and $\partial D$ contain a nonflat portion.
The rigorous definition is given below.
This assumption is not restrictive since both $\Omega$ and $D$ are bounded domains with boundaries of class $C^{1,1}$.
\begin{definition}\label{nonflat}
We say that a portion $\Sigma$ of  a $C^{1,1}$ surface is non-flat, if  there exists a point $P \in \Sigma$ such that, considering the reference system and the function $\varphi$ as in Definition \ref{regbordo}, we have that $\varphi$ is not identically zero near $P=0$.
\end{definition}

The main result of this section is the following:
\begin{theorem}\label{unicis}
Let $D\subset\subset\Omega$ be given bounded domains in $\mathbb{R}^3$ with $C^{1,1}$ boundaries. Let us assume that both $\partial \Omega$ and $\partial D$ have non flat portions. Let $\gamma$ and $\alpha$ satisfy assumptions \eqref{gamma}, \eqref{ell},  and \eqref{hpalpha}.
Then, the knowledge of $\mathcal{N}^{\gamma,\alpha}_{NL} (g)$ for every $g\in H^{-1/2}(\partial\Omega)$ uniquely determines $\gamma$.
\end{theorem}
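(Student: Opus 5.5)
The plan is to reduce Theorem \ref{unicis} to a uniqueness statement for the linearized operator, and then to peel off $\gamma_0$ and $\gamma_1$ one layer at a time, using boundary determination and unique continuation.

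Let $\gamma^{(1)},\gamma^{(2)}$ be two tensors of the form \eqref{gamma} with $\mathcal N_{NL}^{\gamma^{(1)},\alpha}=\mathcal N_{NL}^{\gamma^{(2)},\alpha}$. Fix one datum $g_0$ satisfying \eqref{hpg0}. The solutions $u_0^{(i)}=u^{(i)}(\cdot,g_0)$ have the same Cauchy data $(g_0,\,u_0^{(i)}|_{\partial\Omega})$ on $\partial\Omega$.

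\textbf{Step 1: determine $\gamma_0$ at the boundary.} By Corollary \ref{corder}, the linear NtoD maps $\mathcal N_L^{\gamma^{(i)},3\alpha (u_0^{(i)})^2}$ coincide. Near $\partial\Omega$ the conductivity is the constant $\gamma_0^{(i)}$, and the potential $q_i=3\alpha(u_0^{(i)})^2$ is bounded, a lower order term. A boundary determination argument (Kohn--Vogelius / Alessandrini type) then applies. Concretely, I would plug singular solutions concentrating at a point $P\in\partial\Omega$ into the Alessandrini identity
\[
\int_\Omega (\gamma^{(1)}-\gamma^{(2)})\nabla v_1\cdot\nabla v_2+\int_\Omega (q_1-q_2)v_1v_2=0 ,
\]
which follows from equality of the NtoD maps. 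The leading singularity recovers the tangential part of $\gamma_0$ (and, by the usual argument, $\gamma_0\nu\cdot\nu$ and the mixed terms) at $P$. The $q$-term is of lower order and does not contribute.

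The standard obstruction is that only $\gamma_0$ up to a diffeomorphism fixing the boundary is determined. Since $\gamma_0$ is constant, one only needs to know the quadratic form restricted to tangent planes at many points. This is where the non-flatness of $\partial\Omega$ enters: near a nonflat point the normals $\nu(P)$ span enough directions that the constant symmetric matrix $\gamma_0$ (six unknowns) is determined by its values on the planes $\nu(P)^\perp$, or by $\gamma_0\nu\cdot\nu$ at several $P$. So $\gamma_0^{(1)}=\gamma_0^{(2)}$.

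\textbf{Step 2: propagate to $\partial D$.} With $\gamma_0$ known, $u_0^{(1)}$ and $u_0^{(2)}$ solve the same semilinear equation in $\Omega\setminus\overline D$ with the same Cauchy data on $\partial\Omega$. Their difference $z$ satisfies $\dive(\gamma_0\nabla z)=\alpha q z$ with $q$ bounded, as in \eqref{eqz}. Unique continuation for this linear equation with constant leading part gives $u_0^{(1)}=u_0^{(2)}$ on $\Omega\setminus \overline D$. Hence $q_1=q_2$ there.

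Likewise, the solutions of the linearized problems with equal Cauchy data coincide in $\Omega\setminus\overline D$. Taking Cauchy data on $\partial D$ from outside, the transmission conditions transfer this to equality of the NtoD-type maps on $\partial D$ for the interior problems $-\dive(\gamma_1^{(i)}\nabla\cdot)+q_i$ in $D$. Alternatively, and more simply, I would use the Alessandrini identity above, which now reduces to an integral over $D$, together with Runge-type density of solutions.

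\textbf{Step 3: determine $\gamma_1$.} I would repeat the boundary determination at points of $\partial D$, using singular solutions with poles approaching $\partial D$ from $\Omega\setminus\overline D$. These exist because the operator outside $D$ is now common to both problems, as in the Alessandrini--de Hoop--Gaburro approach \cite{Al-dH-G}. The blow-up rate of $\int_D(\gamma_1^{(1)}-\gamma_1^{(2)})\nabla v_1\cdot\nabla v_2$ would contradict the boundedness of the remaining terms unless the form $\gamma_1^{(1)}-\gamma_1^{(2)}$ vanishes on the relevant directions at each point. Non-flatness of $\partial D$ then gives $\gamma_1^{(1)}=\gamma_1^{(2)}$.

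\textbf{Main obstacle.} I expect Step 3 to be the hardest. The difficulty is that $q_i$ inside $D$ depends on the unknown $\gamma_1$ through $u_0^{(i)}$, so the potentials differ in $D$. The remedy I would try is to show that the $q$-difference term is $O(1)$, or at most $O(|\log\mathrm{dist}|)$, under the Green's function singularity, while the gradient term blows up like $\mathrm{dist}^{-1}$. This separates the leading orders, and the potential difference never needs to be identified.

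A further subtlety is extracting all six entries of $\gamma_1$ from the jump of the singular solutions across $\partial D$. For this I would rely on the known constant $\gamma_0$ and use the explicit fundamental solution of the two-phase half-space transmission problem with constant anisotropic coefficients.
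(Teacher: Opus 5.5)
Your skeleton matches the paper's:
\begin{itemize}
\item linearize at the positive background $u_0$ (Corollary \ref{corder});
\item recover $\gamma_0$ from boundary data and non-flatness of $\partial\Omega$;
\item use unique continuation for $u_0^{(1)}-u_0^{(2)}$ to get $q^{(1)}=q^{(2)}$ in $\Omega\setminus D$;
\item pass to an NtD map on $\partial D$, and repeat with non-flatness of $\partial D$.
\end{itemize}
The tools differ at each stage, and a few statements need correcting.

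\textbf{Boundary determination.} The paper does not compare two conductivities. It shows that the NtD map yields the boundary values of the Neumann kernel, whose expansion
$N^{\Omega}_{\gamma,q}(x,y')=\frac{1}{2\pi}(\mathfrak g_0(x-y')\cdot(x-y'))^{-1/2}+O(|\ln|x-y'||)$
gives $\mathfrak g_0\xi'\cdot\xi'$ for every tangent direction directly. The potential only enters the remainder. Your Alessandrini-identity version is plausible, but in the anisotropic case it is not a sign argument as in Kohn--Vogelius, because $v_1,v_2$ are singular solutions of different operators. You would need an explicit leading-order computation with a rich enough family of poles.

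Be precise about what is recovered. At a point one gets only the tangential block of $\mathfrak g_0=(\det\gamma_0)\gamma_0^{-1}$, not of $\gamma_0$. The quantities $\gamma_0\nu\cdot\nu$ and the mixed terms are exactly the diffeomorphism gauge, so your parenthetical claim is false. Run the non-flatness linear algebra on $\mathfrak g_0$ and then invert, $\gamma_0=(\det\mathfrak g_0)^{1/2}\mathfrak g_0^{-1}$. Three pairwise distinct tangent planes determine a symmetric $3\times 3$ form, whereas two give only five independent numbers.

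\textbf{Passage to $\partial D$.} The paper builds $S(y,z)$ from Neumann kernels on an enlarged domain and uses unique continuation in each variable. Your route is a legitimate and shorter alternative: equal exterior Cauchy data on $\partial D$ via the transmission conditions, plus density of the resulting conormal traces in $H^{-1/2}(\partial D)$ (Hahn--Banach and unique continuation in $\Omega\setminus\overline D$).

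\textbf{Step 3.} This is heavier than needed. Once the NtD maps on $\partial D$ coincide, the paper simply reapplies the boundary determination with $D$ in place of $\Omega$. The obstacle you single out, $q_1\ne q_2$ in $D$, is the same situation as near $\partial\Omega$ in your Step 1, where $q_1,q_2$ also differ a priori. It is harmless for the same reason: the potential affects only the lower-order remainder. So no two-phase transmission fundamental solution is required.
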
 
\medskip
The proof of the Theorem \ref{unicis} relies on the combination of several partial results. 

\subsection{Notation and useful definitions}
Let us first introduce some useful definitions and notation.
Consider the metric
\begin{equation*}
    \mathfrak{g}=(\mbox{det}\ {\gamma}){\gamma}^{-1}
\end{equation*}

Let $N_{\gamma_0}$ be the Neumann kernel for the operator $\mbox{div}(\gamma_0\nabla \cdot)$, with constant $\gamma_0\in \emph{Sym}_3$ on the half space $\mathbb{R}^3_+$. For $y\in \mathbb{R}^3_+$ $N_{\gamma_0}$ is the distributional solution to 
\begin{equation}\label{NeumannKernelsemispazioint}
\begin{cases}
\dive({\gamma_0}\nabla N_{\gamma_0}(\cdot,y))=-\delta(\cdot-y) & \mbox{ in }\mathbb{R}^3_+ ,\\[3pt]
\gamma_0 \nabla N_{\gamma_0}(\cdot,y)\cdot \nu=0 & \mbox{ on }\Pi_3,\\[3pt]
N_{\gamma_0}(x,y)\rightarrow 0  &  \mbox{ as}\ \ |x|\rightarrow +\infty,
\end{cases}
\end{equation} 
 where $\Pi_3=\{x=(x',x_3)\in\mathbb{R}^3: \ x_3=0\}$. 
For $y'\in \Pi_3$, we consider the boundary extension of the solution of \eqref{NeumannKernelsemispazioint} , $N_{\gamma_0}(\cdot,y')$, which turns out to be the distributional solution to
\begin{equation}\label{NeumannKernelsemispaziobd}
\begin{cases}
\dive({\gamma_0}\nabla N_{\gamma_0}(\cdot,y'))=0 & \mbox{ in }\mathbb{R}^3_+ ,\\[3pt]
\gamma_0 \nabla N_{\gamma_0}(\cdot,y')\cdot \nu=\delta(\cdot-y') & \mbox{ on }\Pi_3,\\[3pt]
N_{\gamma_0}(x,y')\rightarrow 0  &  \mbox{ as}\ \ |x|\rightarrow +\infty,
\end{cases}
\end{equation} 
and for every $x\in \mathbb{R}^3_+$ and $y'\in \Pi_3$ we have 
\begin{equation}\label{Nsemip}
N_{\gamma_0}(x,y')=\frac{1}{2\pi}\:\Big({\mathfrak{g}_0}(x-y')\cdot(x-y')\Big)^{-\frac{1}{2}},\end{equation}
where 
\[\mathfrak{g}_0=(\det\gamma_0)\gamma_0^{-1}.\]
In particular, if $N_{\gamma_0}(x',y')$ is known for every $x',y'\in \Pi_3$, then $\mathfrak{g}_0\xi\cdot \xi$ is known fo every $\xi\in \Pi_3$.

Recall that, for $y\in\Omega$, the Neumann kernel $N^{\Omega}_{\gamma, q}(\cdot,\:y)$ for the Schr\"{o}dinger type operator $\mbox{div}(\gamma\nabla ) -q$ in $\Omega$ is defined to be the distributional solution to
\begin{equation*}
\begin{cases}
\mbox{div}(\gamma(\cdot)\nabla N^{\Omega}_{\gamma, q}(\cdot,y)) - q(\cdot) N^{\Omega}_{\gamma, q}(\cdot,y) =-\delta(\cdot -y)  &\mbox{ in }\Omega,\\[3pt]
\gamma (\cdot)\nabla N^{\Omega}_{\gamma, q}(\cdot, y)\cdot \nu=0  &\mbox{ on }\partial\Omega.\end{cases}
\end{equation*}
$N^{\Omega}_{\gamma, q}$ extends continuously up to the boundary $\partial\Omega$ and  for $y'\in\partial\Omega$, it solves

\begin{equation*}
\begin{cases}
\mbox{div}(\gamma(\cdot) \nabla N^{\Omega}_{\gamma, q}(\cdot,y')) - q(\cdot)N^{\Omega}_{\gamma, q}(\cdot,y') =0 & \mbox{ in }\Omega\\[3pt]
\gamma (\cdot)\nabla N^{\Omega}_{\gamma, q}(\cdot, y')\cdot \nu=\delta(\cdot -y')
& \mbox{ on }\partial\Omega,
\end{cases} 
\end{equation*}
where here $\delta$ is the $2-$ dimensional Dirac delta.

The existence of $N^{\Omega}_{\gamma, q}(\cdot,\:y)$ is a consequence of the results in \cite{kim2024neumann}, where the authors prove existence of the Neumann kernel for elliptic operators of the form
\[
 -\operatorname{div}(A\nabla u) + d u
\]
in bounded Lipschitz domains, with $A$ uniformly elliptic and bounded and with
a strictly positive potential $d \ge d_0 > 0$. 

\subsection{Main steps of the proof of Theorem \ref{unicis}.}
The proof combines the linearization result of Section 3 with the boundary determination
for anisotropic conductivities and an interior unique continuation argument. We assume
throughout that $\gamma$ and $\alpha$ satisfy \eqref{gamma}–\eqref{hpalpha}, that
$\partial\Omega$ and $\partial D$ are of class $C^{1,1}$, and that $\partial\Omega$ and $\partial D$ contain  non-flat open portions in the sense of Definition~\ref{nonflat}. \\

\textbf{Premise.}
By Proposition~2.1 the nonlinear Neumann problem \eqref{PN} is well posed for every
$g\in H^{-1/2}(\partial\Omega)$.
Propositions~2.3 and~2.6 provide $L^\infty$ and $C^{0,\delta}$ bounds for the solution
$u(\cdot,g)$ if $g\in W^{1-1/p}_p(\partial\Omega)$.
Moreover, by Proposition~3.1 (and Proposition~3.2), for a fixed nonnegative and non trivial datum
$g_0\in W^{1-1/p}_p(\partial\Omega)$ the corresponding solution
$u_0=u(\cdot,g_0)$ is strictly positive in $\Omega$ with a quantitative lower bound.

As a consequence, the linearized operator
\[
L = -\operatorname{div}(\gamma\nabla\cdot) + 3\alpha u_0^2
\]
has a bounded strictly positive zeroth–order coefficient.
Furthermore, Theorem~3.7 shows that the Fréchet derivative of the nonlinear
Neumann-to-Dirichlet map at $g_0$ coincides with the linear NtD map
$\mathcal{N}_{L}^{\gamma,q}$ where $q=3\alpha u_0^2$ and Corollary~3.8 shows that  $\mathcal{N}_{NL}^{\gamma ,\alpha}$ determines $\mathcal{N}_{L}^{\gamma,q}$.

\medskip

We now describe the main steps of the proof.

\begin{enumerate}

\item[Step 1.]
\emph{Determination of $\gamma$ on $\partial\Omega$ and in $\Omega\setminus D$.}
From the linear NtD map we first recover the boundary values of the Neumann kernel
$N^{\Omega}_{\gamma ,q }$ via Lemma~4.2, which expresses the bilinear form associated
with $\mathcal{N}_{L}^{\gamma, q }$ as a double integral over
$\partial\Omega\times\partial\Omega$.

Lemma~4.3 provides an asymptotic expansion of $N^{\Omega}_{\gamma,q}(x,y)$ near a boundary
point $y'\in\partial\Omega$,
\[
N^{\Omega}_{\gamma,q}(x,y')
=
\frac{1}{2\pi}
\Big(\mathfrak{g}_0 (x-y')\cdot(x-y')\Big)^{-1/2}
+
\mathcal{O}(|\ln|x-y'||),
\quad
x\to y',\ x\in\overline{\Omega}\setminus\{y'\}.
\]
Using this expansion, Lemma~4.4 shows that the leading term determines the tangential
component of the metric
\[
\tilde{\mathfrak{g}}_{ij}
=
\mathfrak{g}_0 e_i\cdot e_j,
\qquad i,j=1,2,
\]
where $\{e_1,e_2\}$ is an orthonormal basis of the tangent plane at $y'$ in the direction of $e_1$ and $e_2$
Lemma~4.5, together with the non-flatness assumption, allows us to recover the full
metric $\mathfrak{g}_0$ on $\partial\Omega$ and hence $\gamma=\gamma_0$ on $\partial\Omega$ .
Since $\gamma$ is constant in $\Omega\setminus D$, it follows that $\gamma=\gamma_0$ in $\Omega\setminus D$.

\item[Step 2.]
\emph{Identification of the potential $q$ in $\Omega\setminus D$.}
With $\gamma^{(1)}=\gamma^{(2)}=\gamma_0$ in $\Omega\setminus D$, let $u_0^{(i)}$ be the
background solutions corresponding to $g_0$ for $(\gamma^{(i)},\alpha)$, and set
$w=u^{(1)}_{0}-u^{(2)}_{0}$.
Equality of the nonlinear NtD maps implies that $u^{(1)}_{0}$ and $u^{(2)}_{0}$ have the same
Cauchy data on $\partial\Omega$.
Subtracting the equations satisfied by $u^{(1)}_{0}$ and $u^{(2)}_{0}$ in $\Omega\setminus D$
yields
\[
-\operatorname{div}(\gamma\nabla w) + \alpha p(x) w = 0
\quad \text{in } \Omega\setminus D,
\]
where
$p(x)=(u^{(1)}_{0})^2+u^{(1)}_{0}u^{(2)}_{0}+(u^{(2)}_{0})^2\in L^\infty(\Omega)$.
By unique continuation, $w\equiv0$ in $\Omega\setminus D$, hence
\[
q^{(1)}=3\alpha u^{(1)}_{0} =3\alpha u^{(2)}_{0}=q^{(2)}
\quad \text{in } \Omega\setminus D.
\]

\item[Step 3.]
\emph{From the linear NtD map $\mathcal{N}_{L}^{\gamma, q}$ on $\partial\Omega$ to the linear NtD map on $\partial D$, $\mathcal{N}_{L, \partial D}^{\gamma, q}$.}

From 
\begin{equation*}
    {\mathcal{N}}_{L}^{\gamma^{(1)}, q^{(1)}}={\mathcal{N}}_{L}^{\gamma^{(2)}, q^{(2)}}
\end{equation*}
 the fact that $\gamma^{(1)}=\gamma^{(2)}=\gamma_0$ and $q^{(1)}=q^{(2)}$ in $\Omega\setminus D$
and unique continuation properties of solutions to linear elliptic equations
it follows that
\begin{equation*}
    {\mathcal{N}}_{L, \partial D}^{\gamma^{(1)}, q^{(1)}}={\mathcal{N}}_{L, \partial D}^{\gamma^{(2)}, q^{(2)}}
\end{equation*}


\item[Step 4.]
\emph{Recovery of $\gamma$ inside $D$ and conclusion.}
Repeating the boundary determination argument of Step~1 with $D$ in place of $\Omega$,
we reconstruct $\gamma$ on $\partial D$ and hence in $D$, since $\gamma$ is constant
there.
Together with Step~1 this yields $\gamma_1=\gamma_2$ in all of $\Omega$, completing the
proof of Theorem~\ref{unicis}.

\end{enumerate}

\subsection{Auxiliary results}
\begin{lemma}
If the linear Neumann-to-Dirichlet map $\mathcal{N}^{\gamma, q}_{L}$   is known, then the Neumann kernel
$N^\Omega_{\gamma,q}(x,y)$ is known for every $x,y\in\partial\Omega$, $x\neq y$

\end{lemma}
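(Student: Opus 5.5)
The plan is to express the linear NtoD map as an integral operator on $\partial\Omega$ whose kernel is the boundary restriction of the Neumann kernel $N^\Omega_{\gamma,q}$. Its Schwartz kernel, away from the diagonal, is then read off from the bilinear form $\langle g_1,\mathcal{N}^{\gamma,q}_L(g_2)\rangle_{\partial\Omega}$.

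\textbf{Representation formula.} For $g\in W^{1-1/p}_p(\partial\Omega)$ (or smooth $g$), let $v$ solve \eqref{LN} with $c=q=3\alpha u_0^2$, which is bounded and satisfies $q\ge 3\alpha_0c_0^2>0$. Fix $y\in\Omega$ and test the equation for $N^\Omega_{\gamma,q}(\cdot,y)$ against $v$, and the equation for $v$ against $N^\Omega_{\gamma,q}(\cdot,y)$. This is done rigorously by excising a small ball $B_\varepsilon(y)$, or by using the weak formulation of the kernel from \cite{kim2024neumann}. By Green's identity,
\[
v(y)=\int_{\partial\Omega}N^\Omega_{\gamma,q}(x,y)\,g(x)\,d\sigma(x),\qquad y\in\Omega .
\]
The kernel is symmetric, $N^\Omega_{\gamma,q}(x,y)=N^\Omega_{\gamma,q}(y,x)$, because $\gamma$ is symmetric and the operator is self-adjoint. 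This follows by applying the same Green identity to $N(\cdot,x)$ and $N(\cdot,y)$.

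\textbf{Passage to the boundary.} Next, let $y\to y'\in\partial\Omega$. Since $N^\Omega_{\gamma,q}$ extends continuously up to $\partial\Omega$ off the diagonal, with a singularity of order $|x-y'|^{-1}$, the singularity is integrable on the two-dimensional surface $\partial\Omega$. Together with the continuity of $v$ on $\overline\Omega$ (Remark \ref{solproblin}), this gives
\[
\mathcal{N}^{\gamma,q}_L(g)(y')=\int_{\partial\Omega}N^\Omega_{\gamma,q}(x,y')g(x)\,d\sigma(x).
\]
Dominated convergence is justified by a uniform bound $|N(x,y)|\le C|x-y|^{-1}$, which I would take from \cite{kim2024neumann}. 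Pairing with a second datum $h$ then yields the double-integral identity
\[
\langle h,\mathcal{N}^{\gamma,q}_L(g)\rangle=\int_{\partial\Omega}\int_{\partial\Omega}N^\Omega_{\gamma,q}(x,y)g(x)h(y)\,d\sigma(x)\,d\sigma(y).
\]

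\textbf{Recovering the kernel.} Knowledge of $\mathcal{N}^{\gamma,q}_L$ therefore determines this bilinear form for all smooth $g,h$. Take $g,h$ supported in disjoint small neighbourhoods of $x_0\ne y_0$ on $\partial\Omega$; there the kernel is continuous. Letting $g,h$ run through approximate identities concentrating at $x_0$ and $y_0$ gives $N^\Omega_{\gamma,q}(x_0,y_0)$ as a limit of known quantities. Equivalently, two operators with the same bilinear form have kernels agreeing as distributions on $\partial\Omega\times\partial\Omega$, and hence pointwise off the diagonal by continuity.

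The main obstacle is the justification of the boundary trace step: the continuity of $N^\Omega_{\gamma,q}$ up to the boundary, together with the integrable pointwise bound near the diagonal, on a $C^{1,1}$ domain with a conductivity that jumps across $\partial D$. Since $\partial D$ stays at positive distance from $\partial\Omega$, I would handle this near $\partial\Omega$ using the constant coefficient $\gamma_0$ and the $W^2_p$ boundary regularity of Remark \ref{uw2p}. The bounds from \cite{kim2024neumann} would be used in the interior.
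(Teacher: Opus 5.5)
Your proposal is correct and follows the paper's argument: represent $\mathcal{N}^{\gamma,q}_L$ as the boundary integral operator with kernel $N^\Omega_{\gamma,q}$, write the bilinear form as a double integral over $\partial\Omega\times\partial\Omega$, and recover $N^\Omega_{\gamma,q}(x,y)$ for $x\neq y$ with approximate identities. The paper cites the boundary representation formula directly, whereas you derive it from the interior Green identity and a limit $y\to y'$. In that limit the bound $C|x-y|^{-1}$ moves with $y$, so it is not a fixed dominating function. Either let $y$ approach along the inward normal, where $C^{1,1}$ regularity gives $|x-y|\ge c|x-y'|$ on $\partial\Omega$, or use a uniform-integrability (Vitali) argument.
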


\begin{proof}
Let $\varphi\in C^{0,1}(\partial\Omega)\cap H^{-1/2}(\partial\Omega)$ and let $u\in H^1(\Omega)$ be the
(unique) weak solution of
\begin{equation*}
\begin{cases}
- \operatorname{div}(\gamma\nabla u) + q u = 0 & \text{in }\Omega,\\[2mm]
\gamma\nabla u\cdot\nu = \varphi & \text{on }\partial\Omega.
\end{cases}
\end{equation*}
By the representation formula for Neumann problems with kernel $N^\Omega_{\gamma,q}$, \cite{colton1998inverse}, we have for every $\xi\in\partial\Omega$
\[
u(\xi)
= \int_{\partial\Omega} N^\Omega_{\gamma,q}(\xi,\eta)\,\varphi(\eta)\,d\sigma(\eta).
\]
By definition of the linear Neumann-to-Dirichlet map, $u|_{\partial\Omega} = N^L_{\gamma,q}\varphi$,
hence
\begin{equation*}
\mathcal{N}^L_{\gamma,q}\varphi(\xi)
= \int_{\partial\Omega} N^\Omega_{\gamma,q}(\xi,\eta)\,\varphi(\eta)\,d\sigma(\eta),
\qquad \xi\in\partial\Omega.
\end{equation*}

Then for any $\psi\in C^{0,1} (\partial\Omega) \cap H^{-\frac{1}{2}}(\partial\Omega)$ it follows 
\begin{equation}\label{nucleo}
 \langle \psi, {\mathcal{N}}_{L}^{\gamma, q} \varphi\rangle_{\partial\Omega} = \int_{\partial\Omega}\psi(\xi) \mbox{d}\sigma(\xi) \int_{\partial\Omega} N^{\Omega}_{\gamma, q}(\xi,\eta) \varphi(\eta) \mbox{d}\sigma(\eta)=\int_{\partial\Omega \times \partial\Omega} N^{\Omega}_{\gamma, q}(\xi,\eta) \psi(\xi)\varphi(\eta) \mbox{d}\sigma(\xi)\mbox{d}\sigma(\eta)
\end{equation}
By choosing in \eqref{nucleo}
\begin{equation*}
\psi(\xi)= \delta_{\epsilon}(\xi,x) \quad
\varphi(\eta)=\delta_{\epsilon}(\eta, y)
\end{equation*}
where $\delta_{\epsilon}$ are approximate Dirac's delta function on $\partial\Omega$ centered on the second argument,  and by letting $\epsilon \rightarrow 0$  we determine $ N^{\Omega}_{\gamma, q}(x,y)$ for $x, y\in\partial\Omega$.
\end{proof}

In the following results, leading to Proposition \ref{lemma4.5}, we show that the knowledge of $N^{\Omega}_{\gamma,q}$ determines $\gamma$ in $\Omega\setminus D$. This part is true in the slightly more general assumptions that $\gamma$ is constant in a neighborhood of a non-flat portion of $\partial \Omega$.

\begin{lemma}\label{theorem neumann function holder}
Let $y'\in\partial\Omega$ and suppose there exists a neighborhood $\mathcal{U}$ of $y'$
such that $\partial\Omega\cap \mathcal{U}$ is $C^{1,1}$ and  $\gamma=\gamma_0$ in $\mathcal{U}\cap\Omega$.
Then, given $q\in L^{\infty}(\Omega)$ with $q>0$ in $\Omega$, we have 
\begin{equation*}
N^{\Omega}_{\gamma,q}(x,y')=
\frac{1}{2\pi}\:\Big({\mathfrak{g}_0}(x-y')\cdot(x-y')\Big)^{-\frac{1}{2}}+
\mathcal{O}(|\ln|x-y'||),
\end{equation*}
as $x\rightarrow y'$, $x\in\overline{\Omega}\setminus{\{y'\}}$. 
\end{lemma}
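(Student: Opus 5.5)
We aim to compare the Neumann kernel $N^{\Omega}_{\gamma,q}(\cdot,y')$ with the explicit half-space kernel $N_{\gamma_0}(\cdot,y')$ of \eqref{Nsemip}, after flattening the boundary near $y'$, and show that their difference is $\mathcal{O}(|\ln|x-y'||)$.

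\textbf{Step 1: flattening and the leading part.} Fix $y'=0$ and the coordinates of Definition \ref{regbordo}. Take a $C^{1,1}$ diffeomorphism $\Phi(x',x_3)=(x',x_3-\varphi(x'))$ mapping $\Omega\cap B_{r_0}$ onto a piece of $\mathbb{R}^3_+$. Since $\varphi(0)=|\nabla\varphi(0)|=0$, its Jacobian satisfies $D\Phi(x)=I+\mathcal{O}(|x|)$. The pushed-forward conductivity is $\tilde\gamma=(D\Phi\,\gamma_0\,D\Phi^T)/|\det D\Phi|$, so $\tilde\gamma(z)=\gamma_0+\mathcal{O}(|z|)$, a Lipschitz perturbation of $\gamma_0$.

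Let $\theta$ be a cutoff equal to $1$ near $0$ and supported in $\mathcal{U}$. Define
\[
\Gamma(x):=\theta(x)\,N_{\gamma_0}(\Phi(x),0).
\]
Then $\Gamma$ has exactly the leading singularity of the claim. Indeed, $|\Phi(x)-x|=\mathcal{O}(|x|^2)$, and $N_{\gamma_0}$ is homogeneous of degree $-1$ with gradient of order $|z|^{-2}$. Hence
\[
N_{\gamma_0}(\Phi(x),0)-\frac{1}{2\pi}\big(\mathfrak{g}_0x\cdot x\big)^{-1/2}=\mathcal{O}(1),
\]
which is harmless.

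\textbf{Step 2: the remainder problem.} Set $R=N^{\Omega}_{\gamma,q}(\cdot,y')-\Gamma$. In the weak sense, $R$ solves
\[
-\dive(\gamma\nabla R)+qR=F\quad\text{in }\Omega,\qquad \gamma\nabla R\cdot\nu=h\quad\text{on }\partial\Omega.
\]
Here $F=\dive(\gamma\nabla\Gamma)-q\Gamma$. Both the Dirac masses at $y'$ cancel, since $\Gamma$ reproduces $\delta_{y'}$ up to the Jacobian factor, which equals $1$ at $0$. Away from $y'$, the error terms come from three sources: the cutoff, which gives smooth terms; the term $q\Gamma=\mathcal{O}(|x|^{-1})$; and the commutator $\dive\big((\gamma_0-\tilde\gamma)\nabla N_{\gamma_0}\big)$ in flattened variables.

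The key structural point is that $\gamma_0-\tilde\gamma=\mathcal{O}(|z|)$ while $\nabla N_{\gamma_0}=\mathcal{O}(|z|^{-2})$. So $F$ can be written as $\dive G+f$ with $G=\mathcal{O}(|x|^{-1})$ and $f=\mathcal{O}(|x|^{-2})$. Likewise the boundary flux $h$ is $\mathcal{O}(|x|^{-1})$ on $\partial\Omega$.

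\textbf{Step 3: representation of $R$.} Represent $R$ through the Neumann kernel itself, which exists by \cite{kim2024neumann} since $q>0$:
\[
R(x)=\int_\Omega N^{\Omega}_{\gamma,q}(x,y)f(y)\,dy-\int_\Omega\nabla_yN^{\Omega}_{\gamma,q}(x,y)\cdot G(y)\,dy+\int_{\partial\Omega}N^{\Omega}_{\gamma,q}(x,\eta)h(\eta)\,d\sigma(\eta).
\]
We use the standard a priori bounds $N^{\Omega}_{\gamma,q}(x,y)\le C|x-y|^{-1}$ and $|\nabla_yN^{\Omega}_{\gamma,q}(x,y)|\le C|x-y|^{-2}$. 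The second bound holds locally near the boundary, where $\gamma=\gamma_0$ is constant and $\partial\Omega$ is $C^{1,1}$, possibly in an averaged or $L^p$ sense. With these bounds each term is a convolution of the form
\[
\int |x-y|^{-a}|y|^{-b}\,dy\quad\text{with }a+b=3\text{ in }3\text{D, or }a+b=2\text{ on the surface}.
\]
Such borderline convolutions are exactly of size $\mathcal{O}(|\ln|x||)$, which yields the claim.

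\textbf{Main obstacle.} The delicate step is Step 3. First, the gradient bound on $N^{\Omega}_{\gamma,q}$ up to a $C^{1,1}$ boundary is needed. Second, the circularity of using $N^{\Omega}_{\gamma,q}$ to estimate itself must be avoided; this works because only the crude $\mathcal{O}(|x-y|^{-1})$ bound on the kernel, a De Giorgi/Grüter–Widman type estimate, is used. If a pointwise gradient bound is unavailable, I would first try to absorb the $\dive G$ term by an integration by parts on dyadic annuli, using Caccioppoli estimates to control $\nabla N^{\Omega}_{\gamma,q}$ in $L^2$ on each annulus. Summing over the roughly $|\ln|x||$ dyadic scales then produces the logarithm.
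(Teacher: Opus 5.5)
Your proposal is correct, and its skeleton matches the paper's. You flatten $\partial\Omega$ by $z_3=x_3-\varphi(x')$ (Jacobian determinant $1$) and subtract the explicit half-space kernel $N_{\gamma_0}(\cdot,0)$. You then exploit $\tilde\gamma-\gamma_0=\mathcal{O}(|z|)$ against gradients of size $|z|^{-2}$. You close with the borderline convolution $\int|z|^{-1}|z-w|^{-2}\,dz=\mathcal{O}(1+\ln(\rho/|w|))$, which is exactly the paper's $\mathcal{F}_1,\mathcal{F}_2,\mathcal{F}_3$ computation.

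Where you differ is in which kernel represents the remainder:
\begin{itemize}
\item The paper uses Green's identity on the half-ball $B_\rho^+$ with the \emph{explicit} kernel $N_{\gamma_0}(\cdot,w)$ as test function. The unknown kernel then enters only through $|\nabla_x N^\Omega_{\gamma,q}(x,y')|\le C|x-y'|^{-2}$ for the single fixed pole $y'$. The price is a set of boundary integrals on $\partial B_\rho\cap\mathbb{R}^3_+$, which are bounded via the a priori estimates.
\item You cut off globally and represent $R$ through the \emph{unknown} kernel $N^\Omega_{\gamma,q}(x,\cdot)$, with the explicit field $G=(\tilde\gamma-\gamma_0)\nabla N_{\gamma_0}(\cdot,0)$ as source. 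This removes the half-ball boundary terms. In exchange it needs $|\nabla_y N^\Omega_{\gamma,q}(x,y)|\le C|x-y|^{-2}$ uniformly in the pole $x$ near $y'$.
\end{itemize}

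That uniform bound holds pointwise, not just on average, for $y$ in the region where $\gamma=\gamma_0$ and $\partial\Omega$ is $C^{1,1}$. In $B_{|x-y|/2}(y)\cap\Omega$ the function $N^\Omega_{\gamma,q}(x,\cdot)$ has zero conormal data, so you can apply rescaled $C^{1,\beta}$ estimates for the Neumann problem there, together with $|N^\Omega_{\gamma,q}(x,\cdot)|\le C|x-y|^{-1}$. The rescaled boundary only gets flatter, so the constants are uniform and your dyadic fallback is unnecessary.

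Your Step 1 also makes explicit the $\mathcal{O}(1)$ discrepancy between $N_{\gamma_0}(\Phi(x),0)$ and $\frac{1}{2\pi}(\mathfrak{g}_0x\cdot x)^{-1/2}$. The paper passes over this when returning from $z$ to $x$.

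In both versions the representation of $R$, which is not in $H^1$ near $y'$, needs the usual approximation argument. This is a shared technicality rather than a gap in your route.
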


\begin{proof}
The proof is inspired by Theorem 3.4 in \cite{Al-dH-G} and by \cite{Miranda}.

We may assume with no loss of generality that $y'=0$. Let $\rho>0$ be such that   $\gamma=\gamma_0$ in  $B_{2\rho}(y')\cap \Omega$.

Let us consider a test function $\psi\in C^{0,1}_0(B_{\rho})$, then we have 
\begin{eqnarray}\label{1}
    \int_{\Omega\cap B_{\rho}} \gamma_0\nabla_x N^{\Omega}_{\gamma,q}(x,0)\cdot \nabla_x \psi (x) \mbox{d}x+ \int_{\Omega\cap B_{\rho}} q(x)N^{\Omega}_{\gamma,q}(x,0)\psi (x)\mbox{d}x 
    = \psi(0).
\end{eqnarray}
We introduce the change of variable 
\begin{equation}\label{ChVa}
    \begin{cases}
       z'=x'  \\
     z_n=x_n-\varphi(x')
    \end{cases}
\end{equation}
where $\varphi$ is the function introduced in Definition \ref{regbordo}.

By the regularity hypothesis on the boundary we have that 
\begin{equation*}
  z= x + \mathcal{O}(|x'|^{2})  \ .
\end{equation*}
We define now the Jacobian of the change of variables $z$ as $J:=\dfrac{\partial z}{\partial x}$ and deduce that 
\begin{equation*}
J= I + \mathcal{O}(|x'|) \ .
\end{equation*}
We define 
\begin{eqnarray*}
&&\tilde{\gamma}(z)=\left(\frac{1}{\mbox{det}(J)} J\gamma J^T \right)(x(z))\\
&&\tilde{q}(z)=q(x(z))\\
&&{\tilde{N}}_{\gamma, q}^{\Omega}(z)= {{N}}_{\gamma, q}^{\Omega}(x(z),0) \ .
\end{eqnarray*}
We obtain by \eqref{1} 
\begin{eqnarray*}
    \int_{\{z_n>0\}} \tilde{\gamma}(z)\nabla_z \tilde{N}_{\gamma,q}^{\Omega}(z)\cdot \nabla_z\psi(x(z))\mbox{d}z + \int_{\{z_n>0\}} \tilde{q}(z) \tilde{N}_{\gamma,q}^{\Omega}(z)\psi(x(z))\mbox{d}z=\psi(0),
\end{eqnarray*}
and have that 
\begin{eqnarray}\label{sigmalocale}
    \tilde{\gamma}(z)=\gamma_0 + \mathcal{O}(|z'|).
    \end{eqnarray}
Let $N_{\gamma_0}$ be the distributional solution introduced in \eqref{NeumannKernelsemispaziobd} and define 
\begin{equation*}
R(z)= \tilde{N}_{\gamma,q}^{\Omega}(z) - N_{\gamma_0}(z,0).
\end{equation*}
Define $B_{\rho}^+:=B_{\rho}\cap\mathbb{R}^3_+$. The function $R$ satisfies 
\begin{equation}\label{eqR}
\begin{cases}
\mbox{div}(\gamma_0\nabla R(z)) = \mbox{div} ((\gamma_0- \tilde{\gamma}(z))\nabla \tilde{N}_{\gamma,q}^{\Omega}(z))+ \tilde{q}(z)\tilde{N}^{\Omega}_{\gamma, q}(z) & \mbox{ in } B_{\rho}^+\\
\gamma_0\nabla R(z)\cdot \nu=(\gamma_0-\tilde{\gamma}(z))\nabla \tilde{N}^{\Omega}_{\gamma, q}(z)\cdot \nu\ ,
& \mbox{ on }B_{\rho}\cap \Pi_n,
\end{cases}
\end{equation}
The following pointwise estimates for the Neumann kernel and its gradient follow from 
\cite[Theorem~5.15, Corollary~5.16]{kim2024neumann} and from standard elliptic regularity results:
\begin{equation}\label{N}
    |{N}^{\Omega}_{\gamma,q}(x,0)|\le C |x|^{-1} \ \ \mbox{for any } \ x\in \Omega,
\end{equation}
 and
\begin{equation}\label{gN}
|\nabla {N}^{\Omega}_{\gamma,q}(x,0)|\le C |x|^{-2} \ \ \mbox{for any } \ x\in B_{\rho}\cap \Omega.
\end{equation}
As a consequence, we have that 
\begin{equation}\label{reminder}
|R(z)|+|z||\nabla_z R(z)|\leq C,\quad \text{ for every } z\in \partial B_{\rho}\cap \mathbb{R}^3_+ 
\end{equation}
Let us now consider $w\in B^+_{\rho}$. By Green's identity, we get 
\begin{eqnarray*}
R(w)&=&- \int_{B^+_{\rho}}R(z)\dive_z(\gamma_0\nabla_z N_{\gamma_0}(z,w))\mbox{ d}z+\nonumber\\
&=&\int_{B^+_{\rho}} (\gamma_0 - \tilde{\gamma}(z))\nabla_z \tilde{N}_{\gamma,q}^{\Omega}(z)\cdot \nabla_z N_{\gamma_0}(z,w) \mbox{ d}z - \int_{B^+_{\rho}} \tilde{q}(z) \tilde{N}_{\gamma,q}^{\Omega}(z)N_{\gamma_0}(z,w)\mbox{ d}z\nonumber\\
&&+\int_{\partial B_{\rho}^+} N_{\gamma_0}(z,w)(\tilde{\gamma}(z) - \gamma_0)\nabla_z \tilde{N}_{\gamma,q}^{\Omega}(z)\cdot \nu  \mbox{ d}\sigma +
  \int_{\partial B_{\rho}^+} N_{\gamma_0}(z,w) \gamma_0\nabla_z R(z)\cdot\nu \mbox{ d}\sigma+\nonumber\\
&& - \int_{\partial B^+_{\rho}}R(z)\gamma_0\nabla_z N_{\gamma_0}(z,w)\cdot\nu  \mbox{ d}\sigma
\end{eqnarray*}
If we split $\partial B^+_{\rho} = (\partial B_{\rho} \cap \mathbb{R}^n_+) \cup (B_{\rho}\cap \Pi_n )$ then, thank to \eqref{eqR} and  by the boundary condition in \eqref{NeumannKernelsemispazioint}, we get 
\begin{eqnarray}\label{R}
R(w)&=&\int_{B^+_{\rho}} (\gamma_0 - \tilde{\gamma}(z))\nabla_z \tilde{N}_{\gamma,q}^{\Omega}(z)\cdot \nabla_z N_{\gamma_0}(z,w) \mbox{ d}z 
- \int_{B_{\rho}^+} \tilde{q}(z) \tilde{N}_{\gamma,q}^{\Omega}(z)N_{\gamma_0}(z,w)\mbox{ d}z\\
&&\nonumber + \int_{\partial B_{\rho} \cap \mathbb{R}^n_+} N_{\gamma_0}(z,w)(\tilde{\gamma}(z)-\gamma_0)\nabla_z \tilde{N}_{\gamma,q}^{\Omega}(z) \cdot \nu \mbox{ d}\sigma + 
\int_{\partial B_{\rho} \cap \mathbb{R}^n_+} N_{\gamma_0}(z,w)\gamma_0\nabla_z R(z)\cdot\nu \mbox{ d}\sigma\\
&&\nonumber - \int_{\partial B_{\rho}\cap \mathbb{R}^n_+}R(z){\gamma_0}\nabla_z N_{\gamma_0}(z,w)\cdot \nu  \mbox{ d}\sigma.
\end{eqnarray}
By choosing $|w|<\frac{\rho}{2}$, we have that all the boundary integrals in \eqref{R} are uniformly bounded. This follows from \eqref{reminder} and from the fact that, if $|w|<\rho/2$, the singularity of $N_{\gamma_0}(\cdot,w)$ is inside $B_{\rho/2}$, so every boundary integral on $\partial B_\rho$ is an integral of a bounded integrand. 

Let us now estimate the volume integrals.
By \eqref{sigmalocale} and \eqref{gN} we have that 
\begin{equation}\label{gradienti}
    \left|\int_{B_{\rho}^+} (\gamma_0-\tilde{\gamma}(z))\nabla_z \tilde{N}_{\sigma,q}^{\Omega}(z)\cdot \nabla_z N_{\gamma_0}(z,w)\mbox{d}z \right|  \le C \int_{B_{\rho}^+}\frac{|z'|}{ |z|^{2}|z-w|^{2}}\mbox{d}z \le C \int_{B_{\rho}^+} |z|^{-1}|z-w|^{-2}\mbox{d}z.
\end{equation}
Let us set
\begin{equation}\label{raggioR}
    R:=\frac{|w|}{2},
\end{equation}
and notice that $R< \frac{\rho}{4}$.
Let us  write
\[B^+_\rho=\mathcal{F}_1\cup\mathcal{F}_2\cup\mathcal{F}_3\]
where
\begin{eqnarray*}
    &\mathcal{F}_1=\left\{z\in B^+_\rho\, :\, |z-w|\leq R\right\},\quad
    \mathcal{F}_2=\left\{z\in B^+_\rho\, :\, |z-w|>R \mbox{ and } |z|\leq 4R\right\}\\
    &\mathcal{F}_3=\left\{z\in B^+_\rho\, :\, |z-w|>R \mbox{ and } |z|> 4R\right\}
\end{eqnarray*}
In $ \mathcal{F}_1$ we have
\[|z|\geq |w|-|w-z|\geq 2R-R=R,\]
hence
\begin{equation}\label{F1}
\int_{\mathcal{F}_1}|z|^{-1}|z-w|^{-2}\mbox{d}z\leq \frac{1}{R}\int_{|z-w|\leq R}|z-w|^{-2}\mbox{d}z\leq 4\pi.
\end{equation}
In $\mathcal{F}_2$, we have
\[|z|^{-1}|z-w|^{-2}\leq R^{-2},\]
hence
\begin{equation}\label{F2}
\int_{\mathcal{F}_2}|z|^{-1}|z-w|^{-2}\mbox{d}z\leq \frac{1}{2R^2}\int_{|z|\leq 4R}|z|^{-1}\mbox{d}z= 16\pi.
\end{equation}
In $\mathcal{F}_3$, $|z-w|\geq |z|-|w|=|z|-2R\geq |z|/2$, and
\begin{equation}\label{F3}
\int_{\mathcal{F}_3}|z|^{-1}|z-w|^{-2}\mbox{d}z\leq 2\int_{4R<|z|<\rho}|z|^{-3}\mbox{d}z\leq 18\pi \ln\left(\frac{\rho}{4R}\right).
\end{equation}
Finally, from \eqref{gradienti}, \eqref{raggioR}, \eqref{F1}, \eqref{F2} and \eqref{F3}, we get
\begin{equation*}
    \left|\int_{B_{\rho}^+} (\gamma_0-\tilde{\gamma}(z))\nabla_z \tilde{N}_{\sigma,q}^{\Omega}(z)\cdot \nabla_z N_{\gamma_0}(z,w)\mbox{d}z \right|  \le C\left(1+\ln\left(\frac{\rho}{|w|}\right)\right)
\end{equation*}
where $C$ does not depend on $w$.
In a similar way we can get, from \eqref{N} and boundedness of $q$ (\eqref{hpalpha} and \eqref{linfty}), that 
\begin{equation*}
\left|\int_{B_{\rho}^+} \tilde{q}(z) \tilde{N}_{\gamma,q}^{\Omega}(z) N_{\gamma_0}(z,w)\mbox{d}z \right|  \le C |w| \ .
\end{equation*}
Hence we finally deduce that  $|R(z)|\le C \left|\ln|z|\right|$
on $B_{\rho}^+$ and, recalling that $|z|=\mathcal{O}(|x|)$, that we put $y'=0$, and \eqref{Nsemip}, the thesis follows. 
\end{proof}

\begin{lemma}\label{Lemma 4.4}
Let $y'\in\partial\Omega$ and suppose there exists a neighborhood $\mathcal{U}$ of $y'$
such that $\partial\Omega\cap \mathcal{U}$ is $C^{1,1}$ and  $\gamma=\gamma_0$ in $\mathcal{U}\cap\Omega$. Then the
knowledge of the boundary Neumann kernel
$N^\Omega_{\gamma,q}(x,y')$ for every $x\in\partial\Omega\cap \mathcal{U}$ uniquely
determines the tangential metric \[
\tilde{\mathfrak{g}}=\mathfrak{g}_0e_i\cdot e_j,\,\,i,j=1,2\] where $e_1, e_2$ are the directions of the tangent plane, $T_{y'}(\partial\Omega)$, at the point $y'$. 
\end{lemma}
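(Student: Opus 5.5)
We will extract the tangential metric from the blow-up of the boundary kernel at $y'$, using the asymptotic expansion of Lemma \ref{theorem neumann function holder}. As in its proof, we put $y'=0$ and work in the coordinates of Definition \ref{regbordo}, so that $T_{y'}(\partial\Omega)$ is the plane $\Pi_3=\{x_3=0\}$ with orthonormal basis $e_1,e_2$. Near $0$ the boundary is the graph $x_3=\varphi(x')$ with $\varphi(0)=|\nabla\varphi(0)|=0$ and $\varphi\in C^{1,1}$. Hence boundary points can be written as $x(t)=t\xi+\varphi(t\xi)e_3$ with $\xi=\xi_1e_1+\xi_2e_2$, $|\xi|=1$, and $|\varphi(t\xi)|\le Ct^2$.

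First step: insert $x(t)$ into the expansion of Lemma \ref{theorem neumann function holder}. Since $x(t)\in\partial\Omega\cap\mathcal U$ for small $t>0$, this gives
\[
N^\Omega_{\gamma,q}(x(t),0)=\frac{1}{2\pi}\Big(\mathfrak g_0 x(t)\cdot x(t)\Big)^{-1/2}+\mathcal O(|\ln t|).
\]
Because $x(t)=t\xi+\mathcal O(t^2)$, we have $\mathfrak g_0x(t)\cdot x(t)=t^2\,\mathfrak g_0\xi\cdot\xi+\mathcal O(t^3)$. By ellipticity \eqref{ell}, $\mathfrak g_0$ is positive definite, so $\mathfrak g_0\xi\cdot\xi\ge c>0$. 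Therefore
\[
\Big(\mathfrak g_0 x(t)\cdot x(t)\Big)^{-1/2}=t^{-1}(\mathfrak g_0\xi\cdot\xi)^{-1/2}+\mathcal O(1).
\]

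Second step: multiply by $t$ and let $t\to0^+$. Since $t|\ln t|\to0$, we obtain
\[
\lim_{t\to0^+}2\pi t\,N^\Omega_{\gamma,q}(x(t),0)=(\mathfrak g_0\xi\cdot\xi)^{-1/2}.
\]
The left-hand side uses only the known values of $N^\Omega_{\gamma,q}(\cdot,y')$ on $\partial\Omega\cap\mathcal U$ together with the known geometry of $\partial\Omega$. Hence the quadratic form $\xi\mapsto\mathfrak g_0\xi\cdot\xi$ is determined for every unit tangent vector $\xi$, and by homogeneity for every $\xi\in T_{y'}(\partial\Omega)$.

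Third step: polarization. Taking $\xi=e_1$ and $\xi=e_2$ gives the diagonal entries $\tilde{\mathfrak g}_{11}$ and $\tilde{\mathfrak g}_{22}$. Taking $\xi=(e_1+e_2)/\sqrt2$ gives $\tilde{\mathfrak g}_{12}=\mathfrak g_0(e_1+e_2)\cdot(e_1+e_2)/2-(\tilde{\mathfrak g}_{11}+\tilde{\mathfrak g}_{22})/2$, using the symmetry of $\mathfrak g_0$. This yields the full tangential metric. Alternatively, one can compare two conductivities directly: equal limits give equal quadratic forms on the tangent plane.

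The only delicate point is the second step, namely that the error $\mathcal O(|\ln|x-y'||)$ from Lemma \ref{theorem neumann function holder} is genuinely of lower order than the $|x-y'|^{-1}$ singularity along boundary approach. This holds uniformly in direction because the $\mathcal O$ in that lemma is uniform as $x\to y'$ within $\overline\Omega$, and boundary points are included in $\overline\Omega\setminus\{y'\}$. We must also check that the curvature correction $\varphi(t\xi)=\mathcal O(t^2)$ only perturbs the leading term by $\mathcal O(1)$, which follows from the $C^{1,1}$ bound on $\varphi$.
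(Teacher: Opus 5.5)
Your proof is correct and takes essentially the same route as the paper: you restrict the expansion of Lemma~\ref{theorem neumann function holder} to boundary points $(r\xi',\varphi(r\xi'))$, observe that the graph correction perturbs $\mathfrak g_0 x\cdot x$ only at order $o(r^2)$, and multiply by $r$ and let $r\to0$ to read off $\tilde{\mathfrak g}\xi'\cdot\xi'$. Your explicit polarization with $\xi=(e_1+e_2)/\sqrt2$ makes precise the paper's brief remark about ``varying $\xi'$ over a basis''; values on a basis alone would give only the diagonal entries.
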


\begin{proof}
Fix boundary normal coordinates so that $y'=0$, the tangent plane at $0$ is
$\Pi_3=\{x_3=0\}$ and the boundary near $0$ is given by $x_3=\varphi(x')$ with
$\varphi(0)=0$, $\nabla\varphi(0)=0$. By the local expansion of the Neumann
kernel derived in Proposition~4.3 we have, for $x$ near $0$,

\[
N^\Omega_{\gamma,q}(x,0)
=\frac{1}{2\pi}\Bigl(\mathfrak{g}_0x\cdot x\Bigr)^{-\frac{1}{2}}
+o\bigl(|\ln|x||\bigr).
\]

Writing \(x=(x',\varphi(x'))\) we therefore have
\begin{equation}\label{expN}
N^\Omega_{\gamma,q}\bigl((x',\varphi(x')),0\bigr)
=\frac{1}{2\pi}\bigl(\mathfrak{g}_0(x',\varphi(x'))\cdot(x',\varphi(x'))\bigr)^{-1/2}
+o\bigl(|\ln|x'||\bigr).
\end{equation}
Fix a unit tangential direction \(\xi'\in\mathbb R^2\) and put
\[
 x(r) := \bigl(r\xi',\varphi(r\xi')\bigr)\in\partial\Omega,\qquad r>0. 
\]
Because \(\varphi\in C^{1, 1}\) and \(\nabla\varphi(0)=0\) we have
\begin{equation}\label{opic}
\varphi(r\xi')=o(r)\quad\text{as }r\to0.
\end{equation}
We write the metric $\mathfrak{g}_0$  as a $3\times3$ symmetric positive matrix and represent  it as
\[
\mathfrak{g}_0=
\begin{pmatrix} \tilde{\mathfrak{g}} & b\\ b^T & (\mathfrak{g}_0)_{33}\end{pmatrix},
\qquad \tilde{\mathfrak{g}}\in\mathbb R^{2\times2},\ b\in\mathbb R^{2},\ (\mathfrak{g}_0)_{33}\in\mathbb R.
\]
Then for \(x(r)=(r\xi',\varphi(r\xi'))\),
\[
\mathfrak{g}_0x(r)\cdot x(r)
= r^2\tilde{\mathfrak{g}}\xi'\cdot\xi' + 2 r\xi'\cdot b\,\varphi(r\xi') +( \mathfrak{g}_0)_{33}\bigl(\varphi(r\xi')\bigr)^2\\
\]
Using  \eqref{opic} the last two terms are \(o(r^2)\), hence
\[
\mathfrak{g}_0x(r)\cdot x(r) =r^2\tilde{\mathfrak{g}}\xi'\cdot\xi'(1+o(1))
\quad\text{as }r\to 0. 
\]
By \eqref{expN} we have  
\[
\begin{aligned}
N^\Omega_{\gamma,q}\bigl(x(r),0\bigr)
&= \frac{1}{2\pi}r^{-1}\bigl(\tilde{\mathfrak{g}}\xi'\cdot\xi'\bigr)^{-1/2}\,(1+o(1)) + o(|\ln|r|).
\end{aligned}
\]
Multiply both sides by $r$ and letting $r$ go to zero we
\[
\lim_{r\to0} r\,N^\Omega_{\gamma,q}\bigl((r\xi',\varphi(r\xi')),0\bigr)
= \frac{1}{2\pi}\bigl(\tilde{\mathfrak{g}}\xi'\cdot\xi')\bigr)^{-1/2}.
\]
 Varying $\xi'$ over a basis of the tangent plane, that means knowing  $N^\Omega_{\gamma,q}(x,y')$ for $x$ in a neighborhood of $y'$ on the boundary, we recover all entries of the $2\times2$ matrix $\tilde{\mathfrak{g}}$.
\end{proof}

\begin{proposition}\label{lemma4.5}
Let $\Sigma\subset\partial\Omega$ be
an open nonflat portion of $\partial\Omega$ of class $C^{1,1}$, and let $\gamma=\gamma_0$ in a neighbourhood of $\Sigma$.
Then the knowledge of the Neumann kernel
$N^\Omega_{\gamma,q}(x',y')$ for all $x',y'\in\Sigma$ uniquely determines
$\gamma_0$.
\end{proposition}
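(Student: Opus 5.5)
Lemma \ref{Lemma 4.4} determines, at every point $y'\in\Sigma$, the quadratic form $\xi\mapsto\mathfrak{g}_0\xi\cdot\xi$ restricted to the tangent plane $T_{y'}(\partial\Omega)$. The key observation is that $\mathfrak{g}_0=(\det\gamma_0)\gamma_0^{-1}$ is one \emph{constant} matrix, while the tangent planes vary along $\Sigma$. Hence we learn $\mathfrak{g}_0\xi\cdot\xi$ for every $\xi$ in the set
\[
\mathcal{T}:=\bigcup_{y'\in\Sigma}T_{y'}(\partial\Omega).
\]
The plan is to show that a symmetric $3\times3$ matrix is determined by its quadratic form on $\mathcal{T}$ when $\Sigma$ is non-flat, and then to invert the map $\gamma_0\mapsto\mathfrak{g}_0$.

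\textbf{Step 1 (linear algebra).} Suppose two admissible tensors $\gamma_0^{(1)},\gamma_0^{(2)}$ give the same boundary kernel on $\Sigma$. Set $A=\mathfrak{g}_0^{(1)}-\mathfrak{g}_0^{(2)}$, which is symmetric. Then $A\xi\cdot\xi=0$ for all $\xi\in T_{y'}(\partial\Omega)$ and all $y'\in\Sigma$. By polarization, $A\xi\cdot\eta=0$ for all $\xi,\eta$ in a common tangent plane $T$. Let $\nu$ be the unit normal to $T$; the condition then says $A$ maps $T$ into $\mathrm{span}\{\nu\}$.
\begin{itemize}
\item[(a)] If $A$ has rank $\ge2$, the kernel of the bilinear form restricted to $T$ forces $T$ to be one of at most finitely many planes, or a plane from a one-parameter family given by an algebraic condition.
\item[(b)] Concretely, $A|_T=0$ means $T$ lies in the isotropic cone $\{A\xi\cdot\xi=0\}$. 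For a nonzero symmetric $A$, this cone contains only finitely many planes: a nondegenerate real quadric cone contains no plane, a rank-2 form factors as a product of at most two linear forms, and a rank-1 form vanishes on exactly one plane.
\end{itemize}
So if $A\neq0$, the Gauss map $y'\mapsto\nu(y')$ takes at most two values up to sign on $\Sigma$.

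\textbf{Step 2 (use non-flatness).} We must exclude the possibility that $\nu$ takes only finitely many values on $\Sigma$. The normal $\nu$ is Lipschitz, since $\partial\Omega$ is $C^{1,1}$, and $\Sigma$ is connected near $P$. A continuous map into a finite set on a connected neighborhood is constant, so $\nu$ would be constant near $P$. In the coordinates of Definition \ref{regbordo} this gives $\nabla\varphi\equiv\nabla\varphi(0)=0$, hence $\varphi\equiv0$ near $P$, contradicting Definition \ref{nonflat}. Therefore $A=0$, that is $\mathfrak{g}_0^{(1)}=\mathfrak{g}_0^{(2)}$.

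\textbf{Step 3 (recover $\gamma_0$).} From $\mathfrak{g}_0=(\det\gamma_0)\gamma_0^{-1}$ in dimension $3$ we get $\det\mathfrak{g}_0=(\det\gamma_0)^{3}(\det\gamma_0)^{-1}=(\det\gamma_0)^2$. Since $\gamma_0$ is positive definite, $\det\gamma_0=(\det\mathfrak{g}_0)^{1/2}$, and then $\gamma_0=(\det\mathfrak{g}_0)^{1/2}\mathfrak{g}_0^{-1}$. This step matters because in dimension $2$ the map $\gamma_0\mapsto\mathfrak{g}_0$ would lose the conformal factor, whereas in $\mathbb{R}^3$ it is invertible.

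The main obstacle is Step 1(b), the classification of planes contained in the null cone of a symmetric matrix together with the connectedness argument. I would handle it through the signature of $A$. If $A$ is definite, its null cone contains no plane. If $A$ is indefinite of rank $3$, the null cone is a genuine circular-type cone containing only lines. Rank $\le2$ gives at most two planes.
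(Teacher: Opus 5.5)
Your argument is correct, and it takes a different route from the paper's.

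\textbf{The two routes.} The paper fixes coordinates at $P_1=0$ and selects two further points $P_2,P_3\in\Sigma$ with prescribed gradient conditions: $\partial_1\varphi(P_2)=0\neq\partial_2\varphi(P_2)$ and $\partial_1\varphi(P_3)\neq 0$. It then writes down tangent bases at these points and solves an explicit triangular linear system for the six entries of $\mathfrak{g}_0$. You instead work with the difference $A=\mathfrak{g}_0^{(1)}-\mathfrak{g}_0^{(2)}$. Every tangent plane along $\Sigma$ is totally isotropic for $A$, a nonzero symmetric $3\times3$ form has at most two totally isotropic planes, and continuity of the Gauss map on a connected graph neighbourhood of $P$ then contradicts non-flatness.

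\textbf{What each buys.} The paper's route gives explicit reconstruction formulas. However, its choice of $P_2,P_3$ is not guaranteed by non-flatness alone. For a cylindrical patch $\varphi(x')=\tfrac12(d\cdot x')^2$ one has $\nabla\varphi\parallel d$ everywhere, so no rotation of the tangential coordinates produces both required points. Your argument uses exactly the hypothesis that the Gauss map is non-constant near $P$, so it covers such surfaces. It also gives a sharper statement: the tangential data at any three points of $\Sigma$ with pairwise non-parallel normals already determine $\mathfrak{g}_0$. Indeed, the linear map $\mathfrak{g}_0\mapsto(\mathfrak{g}_0|_{T_{P_k}})_{k=1,2,3}$ is injective, and non-flatness supplies such points because the image of the Gauss map is connected with more than one point. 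Solving this injective linear system gives a constructive version of your proof, so you do not lose the paper's advantage.

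\textbf{Clean-ups.}
\begin{enumerate}
\item Delete the clause in Step 1(a) about ``a one-parameter family'' of planes. If such a family existed, Step 2 would fail. Your part (b) correctly shows it does not exist. Note also that a semidefinite rank-2 form does not factor over $\mathbb{R}$, but its null set is a line, so the count of at most two planes still holds.
\item The aside about dimension $2$ is inaccurate for this normalization. For $n=2$, $(\det\gamma_0)\gamma_0^{-1}=\operatorname{adj}\gamma_0$, and this map is invertible. The aside plays no role in your proof.
\end{enumerate}
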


\begin{proof}
We work in local boundary normal coordinates centered such that $0\in\Sigma$ , such that, in a neighborhood of $0$,
\begin{equation}
    \Omega \cap B_{r_0} =\{x\in B_{r_0} | x_3 > \varphi(x_1,x_2)  \}
\end{equation}
where $\varphi$ is given in Definition \ref{regbordo}.
By hypothesis $\gamma\equiv\gamma_0$ in a neighbourhood of $\Sigma$,
hence the associated metric
\[
\mathfrak{g}_0= (\det\gamma_0)\gamma_0^{-1}
\]
is constant on the patch $\Sigma$. Writing its entries in coordinates,
\[
\mathfrak{g}_0=\begin{pmatrix}
(\mathfrak{g}_0)_{11} & (g_0)_{12} & (\mathfrak{g}_0)_{13}\\
(\mathfrak{g}_0)_{12} & (g_0)_{22} & (\mathfrak{g}_0)_{23}\\
(\mathfrak{g}_0)_{13} & (\mathfrak{g}_0)_{23} & (\mathfrak{g}_0)_{33}
\end{pmatrix},
\]
Our goal is to recover the six unknown elements from boundary data.
Lemma~\ref{Lemma 4.4} implies that for any boundary point $P\in\Sigma$ and any orthonormal basis
$\{v_1^P,v_2^P\}$ of the tangent plane $T_P(\Sigma)$, the $2\times2$ matrix
\[
\tilde{ \mathfrak{g}}^P:=\bigl(\mathfrak{g}_0v_i^P\cdot v_j^P\bigr)_{i,j=1,2}
\]
is determined by the boundary Neumann kernel $N^\Omega_{\gamma,q}(\cdot,\cdot)$.
Let $\nu(P)=(\nu_1(P),\nu_2(P),\nu_3(P))$ be the outward unit normal to $\partial\Omega$
at a boundary point $P$, with $\nu_3(P)\neq0$.

We define the following parameters at $P$ by
\[
\alpha(P):=\partial_2\varphi(P)=-\frac{\nu_2(P)}{\nu_3(P)},\qquad
\delta(P):=\partial_1\varphi(P)=-\frac{\nu_1(P)}{\nu_3(P)}.
\]

Since $\Sigma$ is nonflat near $0$, the function $\varphi$ is not identically zero in
any neighbourhood of $0$ contained in $\Sigma$. As $\nabla\varphi$ is continuous,
it follows that at least one of $\partial_1\varphi$ or $\partial_2\varphi$ is not
identically zero near $0$. Therefore we can choose:
\begin{itemize}
\item[] $P_1=0$;
\item[] a point $P_2\in\Sigma$ arbitrarily close to $0$ such that
$\alpha(P_2)=\partial_2\varphi(P_2)\neq0$ and $\delta(P_2)=0$;
\item[] a point $P_3\in\Sigma$ arbitrarily close to $0$ such that
$\delta(P_3)=\partial_1\varphi(P_3)\neq0$..
\end{itemize}
This guarantees that the divisions by $\alpha$ and $\delta$ appearing below are allowed. Hence, without loss of generality, we can pick up 
\[
\nu(P_1)=-e_3,\qquad
\nu(P_2)=\frac{-e_3+\alpha e_2}{\sqrt{1+\alpha^2}},\qquad
\nu(P_3)=\frac{-e_3+\beta e_2+\delta e_1}{\sqrt{1+\beta^2+\delta^2}}.
\]
where $P_1=0$ and $\alpha:=\alpha(P_2)$, $\beta:=\alpha(P_3)=\partial_{x_2}\varphi(P_3)=-\nu_2(P_3)/\nu_3(P_3)$, and $\delta:=\delta(P_3)$. With the above choices of $P_1,P_2,P_3$, convenient orthonormal tangent bases are
\[
\begin{aligned}
&\text{at }P_1:\quad v_1^1=e_1,\quad v_2^1=e_2,\\[3pt]
&\text{at }P_2:\quad v_1^2=e_1,\quad
v_2^2=\frac{e_2+\alpha e_3}{\sqrt{1+\alpha^2}},\\[5pt]
&\text{at }P_3:\quad
v_1^3=\frac{e_1+\delta e_3}{\sqrt{1+\delta^2}},\quad
v_2^3=\frac{e_2+\beta e_3}{\sqrt{1+\beta^2}}.
\end{aligned}
\]

For each $k=1,2,3$ the measured data satisfy
\[
V_k^T \mathfrak{g}_0 V_k=\tilde{ \mathfrak{g}}^{P_k},\qquad V_k=[\,v_1^k\ v_2^k\,].
\]
We now solve these equations explicitly.

\smallskip\noindent
From $P_1$ we read directly
\[
\begin{pmatrix} (\mathfrak{g}_0)_{11} & (\mathfrak{g}_0)_{12}\\ (\mathfrak{g}_0)_{12} & (\mathfrak{g}_0)_{22}\end{pmatrix}=\tilde{\mathfrak{g}}^{P_1}.
\]

\smallskip\noindent
From the off–diagonal entry of $\tilde{\mathfrak{g}}^{P_2}$,
\[
\tilde{\mathfrak{g}}^{P_2}_{12}
=\frac{(\mathfrak{g}_0)_{12}+\alpha (\mathfrak{g}_0)_{13}}{\sqrt{1+\alpha^2}},
\]
we recover
\[
(\mathfrak{g}_0)_{13}=\frac{\sqrt{1+\alpha^2}\,\tilde{\mathfrak{g}}^{P_2}_{12}-(\mathfrak{g}_0)_{12}}{\alpha}.
\]

\smallskip\noindent
From the diagonal entry $\tilde{\mathfrak{g}}^{P_2}_{22}$ we obtain
\begin{equation}
    \label{L1}
2\alpha (\mathfrak{g}_0)_{23}+\alpha^2 (\mathfrak{g}_0)_{33}
=(1+\alpha^2)\tilde{\mathfrak{g}}^{P_2}_{22}-(\mathfrak{g}_0)_{22}.
\end{equation}

\smallskip\noindent
From the diagonal entry $\tilde{\mathfrak{g}}^{P_3}_{11}$,
\[
\tilde{\mathfrak{g}}^{P_3}_{11}
=\frac{(\mathfrak{g}_0)_{11}+2\delta (\mathfrak{g}_0)_{13}+\delta^2 (\mathfrak{g}_0)_{33}}{1+\delta^2},
\]
we solve
\[
(\mathfrak{g}_0)_{33}
=\frac{(1+\delta^2)
\tilde{\mathfrak{g}}^{P_3}_{11}-(\mathfrak{g}_0)_{11}-2\delta (\mathfrak{g}_0)_{13}}{\delta^2}.
\]

\smallskip\noindent
Substituting $(\mathfrak{g}_0)_{33}$ into \eqref{L1} yields
\[
(\mathfrak{g}_0)_{23}
=\frac{(1+\alpha^2)\tilde{\mathfrak{g}}^{P_2}_{22}-(\mathfrak{g}_0)_{22}-\alpha^2 (\mathfrak{g}_0)_{33}}{2\alpha}.
\]
Thus all six entries of $\mathfrak{g}_0$ are uniquely determined.

For $n=3$ the relation $\mathfrak{g}_0=(\det\gamma_0)\gamma_0^{-1}$ implies
$\det \mathfrak{g}_0=(\det\gamma_0)^2$, hence $\det\gamma_0=(\det \mathfrak{g}_0)^{1/2}$ and therefore
\[
\gamma_0=(\det \mathfrak{g}_0)^{1/2}\,\mathfrak{g}_0^{-1}.
\]
This completes the proof.
\end{proof}

\subsection{Proof of Theorem 4.1}
\begin{proof}[Proof of Theorem \ref{unicis}] 
Let $\gamma^{(1)}$ and $\gamma^{(2)}$, satisfying assumptions \eqref{gamma} and \eqref{ell}, such that 
\begin{equation*}
    \mathcal{N}^{\gamma^{(1)},\alpha}_{NL} (g)=\mathcal{N}^{\gamma^{(2)},\alpha}_{NL} (g)\mbox{ for every }g\in H^{-1/2}(\partial\Omega),
\end{equation*}
for a given $\alpha$ satisfying \eqref{hpalpha}.

By Theorem \ref{teoder} and Corollary \ref{corder}, we have that
\begin{equation}\label{mappelinug}
    \mathcal{N}^{\gamma^{(1)},q^{(1)}}_{L} (g)=\mathcal{N}^{\gamma^{(2)},q^{(2)}}_{L} (g)\mbox{ for every }g\in H^{-1/2}(\partial\Omega),
\end{equation}
where $q^{(i)}=3\alpha \left(u_0^{(i)}\right)^2$ and 
$u_0^{(i)}$, for $i=1,2$ is the solution to 
\begin{displaymath}
\left\{ \begin{array}{ll}
\mbox{div}\left(\gamma^{(i)}\nabla u_0^{(i)}\right) - \alpha \left(u_0^{(i)}\right)^3 =0, & \textnormal{in}\quad\Omega\\
\gamma^{(i)} \nabla u_0^{(i)}\cdot \nu=g_0,
& \textnormal{on}\quad{\partial\Omega},
\end{array} \right.
\end{displaymath}
By Proposition \ref{lemma4.5}, \eqref{mappelinug} implies that 
\begin{equation*}
    \gamma^{(1)}=\gamma^{(2)}=\gamma_0\mbox{ in }\Omega\setminus D.
\end{equation*}
Hence, we can write
\begin{equation*}
\gamma^{(i)}(x)=\gamma_0\chi_{\Omega\setminus D}+\gamma_1^{(i)}\chi_{D}\mbox{ for }i=1,2.
\end{equation*}

Moreover, being ${\mathcal{N}}_{NL}^{\gamma^{(1)}, \alpha}(g_0)={\mathcal{N}}_{NL}^{\gamma^{(2)}, \alpha}(g_0)$ we have that $u_0^{(1)}=u_0^{(2)}$ on $\partial \Omega$, so that
  $w=u_0^{(1)}-u_0^{(2)}$, solves the following Cauchy problem: 
\begin{displaymath}\label{CP}
\left\{ \begin{array}{ll}
\mbox{div}({\gamma_0}\nabla w) - \alpha(x) p(x)w =0, & \textnormal{in}\quad\Omega\setminus D \\
{\gamma_0} \nabla w\cdot \nu=0,
& \textnormal{on}\quad{\partial\Omega},\\
w=0 & \textnormal{on}\quad{\partial\Omega}
\end{array} \right.
\end{displaymath}
where $p(x)=\left(u_0^{(1)}\right)^2 +u_0^{(1)}u_0^{(2)} + \left(u_0^{(2)}\right)^2\in L^{\infty}(\Omega\setminus D)$, and $p(x)>0$ by Proposition \ref{defc0}.

By Proposition \ref{bound}  and the a priori assumptions on $\alpha$ we have that 
\[
\|\alpha p\|_{L^{\infty}(\Omega \setminus D)}\leq C
\]
where $C$ depends only on $\|g_0\|_{W_p^{1-\frac{1}{p}}(\partial\Omega)}$ and on $\alpha_0^{-1}$. Hence, by the uniqueness for the Cauchy problem together with the weak unique continuation property (see for example \cite{alessandrini2009stability}) it follows that  $w=0$ in $\Omega \setminus D$. 
Namely we have that $u_0^{(1)}=u_0^{(2)}$ in $\Omega \setminus D$ which in turn implies that $q^{(1)}=3\alpha \left(u_0^{(1)}\right)^2=3\alpha \left(u_0^{(2)}\right)^2=q^{(2)}$ in $\Omega \setminus D$.

We now need to show that $\gamma^{(1)}_1=\gamma_1^{(2)}$.
In order to do this, we show that \eqref{mappelinug} and the fact that 
\[\gamma^{(1)}=\gamma^{(2)} \textrm{ and } q^{(1)}=q^{(2)} \text{ in } \Omega\setminus D\] imply that
\begin{equation*}
    {\mathcal{N}}_{L, \partial D}^{\gamma^{(1)}, q^{(1)}}={\mathcal{N}}_{L, \partial D}^{\gamma^{(2)}, q^{(2)}}
\end{equation*}
where we denote by 
${\mathcal{N}}_{L, \partial D}^{\gamma^{(i)}, q^{(i)}}$ the Neumann to Dirichlet map for the domain $D$ relative to the conductivity and potential $\gamma^{(i)}, q^{(i)}$, $i=1,2$. This part follows the lines of the proof in \cite{Al-dH-G} (and of the preprint \cite{donlon2025uniqueness}).

We denote by 
\begin{equation*}
\Omega_0 =\{x\in \mathbb{R}^n : \mbox{dist}(x,\overline{\Omega})< r_0 \}  \ .
\end{equation*}
For  $i=1,2$, we 
extend $\gamma^{(i)}$ and $q^{(i)}$ to $\Omega_0\setminus\Omega$ by setting 
$\gamma^{(i)}=\gamma_0$ and $q^{(i)}=1$. For sake of shortness we still denote by $\gamma^{(i)}$ and $q^{(i)}$ these extensions.

We consider the Neumann kernel ${\tilde{N}}^{\Omega_0}_{\gamma^{(i)}, q^{(i)}}$ that solves 
\begin{displaymath}\label{neumann kernel enlarged domain}
\left\{ \begin{array}{ll}
\mbox{div}(\gamma^{(i)}(\cdot)\nabla {\tilde{N}}^{\Omega_0}_{\gamma^{(i)}, q^{(i)}}(\cdot,y)) - q^{(i)}(\cdot){\tilde{N}}^{\Omega_0}_{\gamma^{(i)}, q^{(i)}}(\cdot,y) =-\delta(\cdot -y), & \textnormal{in}\quad\Omega_0\\
{\gamma}^i (\cdot)\nabla \tilde{N}^{\Omega_0}_{\gamma^{(i)}, q^{(i)}}(\cdot, y)\nu =0 \ ,
& \textnormal{on}\quad{\partial\Omega_0},
\end{array} \right.
\end{displaymath}

Given $\psi \in C^{0,1}(\partial D)$, let $u^{(i)}, \ i=1,2$ be the solution to 

\begin{equation*}
\left\{ \begin{array}{ll}
\mbox{div}({\gamma^{(i)}}\nabla u^{(i)}) - {q^{(i)}} u^{(i)} =0, & \textnormal{in}\quad D \ ,\\
\gamma^{(i)} \nabla u^{(i)}\cdot \nu =\psi \ ,
& \textnormal{on}\quad{\partial D},
\end{array} \right.
\end{equation*}
We now consider a bounded extension operator 
\begin{eqnarray*}
 T: H^{\frac{1}{2}}(\partial D) \rightarrow H^1(\Omega)    
\end{eqnarray*}
such that, given $f\in H^{\frac{1}{2}}(\partial D)$ we have 
\begin{eqnarray*}
    T f|_{\partial \Omega}=0 \ \ .
\end{eqnarray*}
We set 

\begin{displaymath}
\bar{u}_i = \left\{ \begin{array}{ll}
u^{(i)}, & \textnormal{in}\quad D \ ,\\
T\left(u^{(i)}|_{\partial D} \right),
& \textnormal{in}\quad \Omega \setminus D ,
\end{array} \right.
\end{displaymath}

We have that $\bar{u}_i\in H^1(\Omega)$. Hence, for $x\in D$, it follows
\begin{eqnarray*}
 u^{(i)}(x)&=&\int_{\Omega}\bar{u}_i (y)\left (-\mbox{div}_y( {\gamma^{(i)}}(y)\nabla_y {\tilde{N}}^{\Omega_0}_{\gamma^{(i)}, q^{(i)}}(y,x)) ) + q^{(i)}(y){\tilde{N}}^{\Omega_0}_{\gamma^{(i)}, q^{(i)}}(y,x) \right)\mbox{d}y \\
 & =& \int_{\Omega} \gamma^{(i)}(y) \nabla_y \bar{u}_i (y) \cdot \nabla_y {\tilde{N}}^{\Omega_0}_{\gamma^{(i)}, q^{(i)}}(y,x) + q^{(i)}(y) \bar{u}_i(y){\tilde{N}}^{\Omega_0}_{\gamma^{(i)}, q^{(i)}}(y,x) \mbox{d}y +  \\
 &&- \int_{\partial \Omega} \bar{u}_i (y) \gamma ^i(y) \nabla {\tilde{N}}^{\Omega_0}_{\gamma^{(i)}, q^{(i)}}(y,x) \cdot \nu(y)\mbox{d}\sigma (y)  
\end{eqnarray*}

We observe that 

\begin{equation}\label{homdir}
  \bar{u}_i =0 \ \ \mbox{on}\ \partial \Omega 
\end{equation}
hence we have 
\begin{eqnarray*}
\int_{\partial \Omega}  \bar{u}_i(y) \gamma_i(y)\nabla {\tilde{N}}^{\Omega_0}_{\gamma^{(i)}, q^{(i)}}(y,x)\cdot \nu(y)\mbox{d}\sigma (y)  =0
 \end{eqnarray*}
Using the fact that $\bar u_i$ satisfies the same equation as $u^{(i)}$ in $D$ and \eqref{homdir}, we find that 
\begin{eqnarray*}
&&u^{(i)}(x)=\int_{\Omega} \gamma^{(i)}(y) \nabla_y \bar{u}_i (y) \cdot \nabla_y {\tilde{N}}^{\Omega_0}_{\gamma^{(i)}, q^{(i)}}(y,x) + q^{(i)}(y) \bar{u}_i(y){\tilde{N}}^{\Omega_0}_{\gamma^{(i)}, q^{(i)}}(y,x) \mbox{d}y =\\
&&\int _{D} \gamma^{(i)}(y) \nabla_y \bar{u}_i (y) \cdot \nabla_y {\tilde{N}}^{\Omega_0}_{\gamma^{(i)}, q^{(i)}}(y,x) + q^{(i)}(y) \bar{u}_i(y){\tilde{N}}^{\Omega_0}_{\gamma^{(i)}, q^{(i)}}(y,x) \mbox{d}y\\
&& + \int_{\Omega \setminus D} \gamma^{(i)}(y) \nabla_y \bar{u}_i (y) \cdot \nabla_y {\tilde{N}}^{\Omega_0}_{\gamma^{(i)}, q^{(i)}}(y,x) + q^{(i)}(y) \bar{u}_i(y){\tilde{N}}^{\Omega_0}_{\gamma^{(i)}, q^{(i)}}(y,x) \mbox{d}y=\\
&& \int_{\partial D} \psi(y) {\tilde{N}}^{\Omega_0}_{\gamma^{(i)}, q^{(i)}}(y,x) \mbox{d}\sigma(y) +\int_{\Omega \setminus D} \gamma^{(i)}(y) \nabla_y \bar{u}_i (y) \cdot \nabla_y {\tilde{N}}^{\Omega_0}_{\gamma^{(i)}, q^{(i)}}(y,x) + q^{(i)}(y) \bar{u}_i(y){\tilde{N}}^{\Omega_0}_{\gamma^{(i)}, q^{(i)}}(y,x) \mbox{d}y 
\end{eqnarray*}
which gives
\begin{eqnarray}\label{repformula}
u^{(i)}(x)&=&\int_{\partial D} \psi(y) {\tilde{N}}^{\Omega_0}_{\gamma^{(i)}, q^{(i)}}(y,x) \mbox{d}\sigma(y) +\\&&+\int_{\Omega\setminus D} \gamma^{(i)}(y) \nabla_y \bar{u}_i (y) \cdot \nabla_y {\tilde{N}}^{\Omega_0}_{\gamma^{(i)}, q^{(i)}}(y,x) + q^{(i)}(y) \bar{u}_i(y){\tilde{N}}^{\Omega_0}_{\gamma^{(i)}, q^{(i)}}(y,x) \mbox{d}y\nonumber
\end{eqnarray}

By \eqref{repformula}, differentiation under the integral signs and by using Fubini Theorem we have that, for $x\in D$,
\begin{eqnarray}\label{prodgrad}
&&(\gamma^{(1)} -\gamma^{(2)})(x)\nabla_x u^{(1)}(x)\cdot \nabla_x u^{(2)}(x)= \\
&&\int_{\partial D \times \partial D} \psi(y)\psi(z)(\gamma^{(1)} -\gamma^{(2)})(x)\nabla_x \tilde{N}^{\Omega_0}_{\gamma^{(1)},q^{(1)}}(y,x)\cdot \nabla_x \tilde{N}^{\Omega_0}_{\gamma^{(2)},q^{(2)}}(z,x)\mbox{d}\sigma(y)\mbox{d}\sigma(z)\mbox{d}y \ + \nonumber\\
&&\int_{\partial D \times (\Omega\setminus D)} \psi(y)\gamma^{(2)}_{lk}(z) \partial_{z_l}
\bar{u}_2(z) \partial_{z_k} [(\gamma^{(1)} -\gamma^{(2)})(x)\nabla_x \tilde{N}^{\Omega_0}_{\gamma^{(1)},q^{(1)}}(y,x)\cdot \nabla_x \tilde{N}^{\Omega_0}_{\gamma^{(2)},q^{(2)}}(z,x) ) ]\mbox{d}\sigma(z)\mbox{d}y + \nonumber\\
&& \int_{\partial D \times (\Omega\setminus D)} \psi(y) q^{(2)}(z) \bar{u}_2(z)  [(\gamma^{(1)} -\gamma^{(2)})(x)\nabla_x \tilde{N}^{\Omega_0}_{\gamma^{(1)},q^{(1)}}(y,x)\cdot \nabla_x \tilde{N}^{\Omega_0}_{\gamma^{(2)},q^{(2)}}(z,x) ) ]\mbox{d}\sigma(z)\mbox{d}y +\nonumber\\
&&\int_{(\Omega\setminus D) \times \partial D} \psi(z)\gamma^{(1)}_{lk}(y) \partial_{y_l}
\bar{u}_1(y) \partial_{y_k} [(\gamma^{(1)} -\gamma^{(2)})(x)\nabla_x \tilde{N}^{\Omega_0}_{\gamma^{(1)},q^{(1)}}(y,x)\cdot \nabla_x \tilde{N}^{\Omega_0}_{\gamma^{(2)},q^{(2)}}(z,x) ) ]\mbox{d}\sigma(z)\mbox{d}y + \nonumber\\
&& \int_{(\Omega\setminus D)\times \partial D} \psi(z) q^{(1)}(y) \bar{u}_1(y)  [(\gamma^{(1)} -\gamma^{(2)})(x)\nabla_x \tilde{N}^{\Omega_0}_{\gamma^{(1)},q^{(1)}}(y,x)\cdot \nabla_x \tilde{N}^{\Omega_0}_{\gamma^{(2)},q^{(2)}}(z,x) )]\mbox{d}\sigma(z)\mbox{d}y+\nonumber\\
&& \int_{(\Omega \setminus D) \times (\Omega \setminus D)} \bar{u}_1(y) \bar{u}_2(z) q^{(1)}(y)q^{(2)}(z) [(\gamma^{(1)} -\gamma^{(2)})(x)\nabla_x \tilde{N}^{\Omega_0}_{\gamma^{(1)},q^{(1)}}(y,x)\cdot \nabla_x \tilde{N}^{\Omega_0}_{\gamma^{(2)},q^{(2)}}(z,x) ) ]+ \nonumber\\
&&\int_{(\Omega \setminus D) \times (\Omega \setminus D)} \gamma^{(1)}_{lk}(y)q^{(2)}(z) \partial_{y_l} \bar{u}_1(y)  \bar{u}_2(z) \partial_{y_k}  [(\gamma^{(1)} -\gamma^{(2)})(x)\nabla_x \tilde{N}^{\Omega_0}_{\gamma^{(1)},q^{(1)}}(y,x)\cdot \nabla_x \tilde{N}^{\Omega_0}_{\gamma^{(2)},q^{(2)}}(z,x) )] \mbox{d}z\mbox{d}y+\nonumber\\
&& \int_{(\Omega \setminus D) \times (\Omega \setminus D)} \gamma^{(2)}_{lk}(z)q^{(1)}(y) \partial_{z_l} \bar{u}_2(z)  \bar{u}_1(y) \partial_{y_k}  [(\gamma^{(1)} -\gamma^{(2)})(x)\nabla_x \tilde{N}^{\Omega_0}_{\gamma^{(1)},q^{(1)}}(y,x)\cdot \nabla_x \tilde{N}^{\Omega_0}_{\gamma^{(2)},q^{(2)}}(z,x) )]\mbox{d}z\mbox{d}y +\nonumber\\
&& \int_{(\Omega \setminus D) \times (\Omega \setminus D)}  
\gamma^{(1)}_{lk}(z)\partial_{z_l} \bar{u}_2(z) \gamma^{(1)}_{mn}(y)\partial_{y_n} \bar{u}_1(y)\partial_{z_k}\partial_{y_m}  [(\gamma^{(1)} -\gamma^{(2)})(x)\nabla_x \tilde{N}^{\Omega_0}_{\gamma^{(1)},q^{(1)}}(y,x)\cdot \nabla_x \tilde{N}^{\Omega_0}_{\gamma^{(2)},q^{(2)}}(z,x) )\mbox{d}z\mbox{d}y \nonumber
\end{eqnarray}
Moreover, we have that, for $x\in D$,
\begin{eqnarray}\label{prodfunz}
  && (q^{(1)}-q^{(2)})(x)u^{(1)}(x)u^{(2)}(x)= \int_{\partial D \times \partial D} \psi(y)\psi(z)(q^{(1)}-q^{(2)})(x) \tilde{N}^{\Omega_0}_{\gamma^{(1)},q^{(1)}}(y,x)\cdot\tilde{N}^{\Omega_0}_{\gamma^{(2)},q^{(2)}}(z,x) \mbox{d}\sigma(y)\mbox{d}\sigma(z)+ \nonumber\\
   && \int_{\partial D \times (\Omega \setminus D)}  \psi(y)\gamma^{(2)}_{lk}\partial_{z_l}\bar{u}_2 \partial_{z_k} ((q^{(1)}-q^{(2)})(x) \tilde{N}^{\Omega_0}_{\gamma^{(1)},q^{(1)}}(y,x)\cdot\tilde{N}^{\Omega_0}_{\gamma^{(2)},q^{(2)}}(z,x) )\mbox{d}\sigma(y)\mbox{d}z +\\
   &&\int_{\partial D\times (\Omega\setminus D)} q_2(z)\bar{u}_2(z) (q^{(1)}-q^{(2)})(x) \tilde{N}^{\Omega_0}_{\gamma^{(1)},q^{(1)}}(y,x)\cdot\tilde{N}^{\Omega_0}_{\gamma^{(2)},q^{(2)}}(z,x)\mbox{d}\sigma(y)\mbox{d}z + \nonumber\\
   && \int_{(\Omega \setminus D)\times \partial D} \psi(z)\gamma^{(1)}_{lk}(y)\partial_{y_l} \bar{u}_1(y) \partial_{y_k} (q^{(1)}-q^{(2)})(x) \tilde{N}^{\Omega_0}_{\gamma^{(1)},q^{(1)}}(y,x)\cdot\tilde{N}^{\Omega_0}_{\gamma^{(2)},q^{(2)}}(z,x)\mbox{d}\sigma(z)\mbox{d}y +\nonumber\\
   && \int_{(\Omega \setminus D)\times \partial D} q^{(1)}(y)\bar{u}_1(y)(q^{(1)}-q^{(2)})(x) \tilde{N}^{\Omega_0}_{\gamma^{(1)},q^{(1)}}(y,x)\cdot\tilde{N}^{\Omega_0}_{\gamma^{(2)},q^{(2)}}(z,x)\mbox{d}\sigma(z)\mbox{d}y +\nonumber\\
   && \int_{(\Omega\setminus D)\times (\Omega \setminus D)} \bar{u}_1(y)\bar{u}_2(z)q^{(1)}(y)q^{(2)}(z)(q^{(1)}-q^{(2)})(x) \tilde{N}^{\Omega_0}_{\gamma^{(1)},q^{(1)}}(y,x)\cdot\tilde{N}^{\Omega_0}_{\gamma^{(2)},q^{(2)}}(z,x) \mbox{d}z\mbox{d}y +\nonumber\\
   && \int_{(\Omega\setminus D)\times (\Omega \setminus D)} 
\gamma^{(1)}_{lk}(y)q^{(2)}(z)\partial_{y_l}\bar{u}_1(y)\bar{u}_2(z)\partial_{y_k}((q^{(1)}-q^{(2)})(x) \tilde{N}^{\Omega_0}_{\gamma^{(1)},q^{(1)}}(y,x)\cdot\tilde{N}^{\Omega_0}_{\gamma^{(2)},q^{(2)}}(z,x) )\mbox{d}z\mbox{d}y +\nonumber\\
&& \int_{(\Omega\setminus D)\times (\Omega \setminus D)} 
\gamma^{(2)}_{lk}(z)q^{(1)}(y)\bar{u}_1(y)\partial_{z_l}\bar{u}_2(z)\partial_{z_k}((q^{(1)}-q^{(2)})(x) \tilde{N}^{\Omega_0}_{\gamma^{(1)},q^{(1)}}(y,x)\cdot\tilde{N}^{\Omega_0}_{\gamma^{(2)},q^{(2)}}(z,x) )\mbox{d}z\mbox{d}y +\nonumber\\
&&\int_{(\Omega\setminus D)\times (\Omega \setminus D)} \gamma^{(2)}_{lk}(z)\partial_{z_l}\bar{u}_2(z)\gamma^{(1)}_{nm}(y)\partial_{y_n}\bar{u}_1(y)\partial_{z_k}\partial_{y_l}(q^{(1)}-q^{(2)})(x) \tilde{N}^{\Omega_0}_{\gamma^{(1)},q^{(1)}}(y,x)\cdot\tilde{N}^{\Omega_0}_{\gamma^{(2)},q^{(2)}}(z,x) \mbox{d}z\mbox{d}y \nonumber
\end{eqnarray}

For any $y,z\in \Omega_0 \setminus D$ we define 

\begin{eqnarray}\label{S}
&&S(y,z) = \int_{D}(\gamma^{(1)}(x)-\gamma^{(2)}(x))\nabla_x \tilde{N}^{\Omega_0}_{\gamma^{(1)},q^{(1)}}(y,x)\cdot \nabla_x \tilde{N}^{\Omega_0}_{\gamma^{(2)},q^{(2)}}(z,x) )\mbox{d}x + \\
&&\int_{D} (q^{(1)}(x)-q^{(2)}(x))\tilde{N}^{\Omega_0}_{\gamma^{(1)},q^{(1)}}(y,x)\cdot \tilde{N}^{\Omega_0}_{\gamma^{(2)},q^{(2)}}(z,x) ) \mbox{d}x.\nonumber
\end{eqnarray}

For any $y,z \in \Omega_0 \setminus D$ we have that 
\begin{eqnarray*}
&&\mbox{div}_y(\gamma^{(1)}(y)\nabla_y S(y,z)) - q^{(1)}(y)S(y,z)=0\\
&&\mbox{div}_z(\gamma^{(2)}(z)\nabla_z S(y,z)) - q^{(2)}(z)S(y,z)=0 \ \ .
\end{eqnarray*}

Moreover, since 
\begin{eqnarray*}
 \gamma^{(1)}(x)=\gamma^{(2)}(x)\ \ x\in \ \Omega\setminus D ;\\   
  q^{(1)}(x)=q^{(2)}(x)\ \ x\in \ \Omega\setminus D \ \ .
\end{eqnarray*}
we have that

\begin{eqnarray}\label{Somega}
&&S(y,z)=\int_{\Omega}(\gamma^{(1)}(x)-\gamma^{(2)}(x))\nabla_x \tilde{N}^{\Omega_0}_{\gamma^{(1)},q^{(1)}}(y,x)\cdot \nabla_x \tilde{N}^{\Omega_0}_{\gamma^{(2)},q^{(2)}}(z,x) )\mbox{d}x + \\
&&\int_{\Omega} (q^{(1)}(x)-q^{(2)}(x))\tilde{N}^{\Omega_0}_{\gamma_1,q_1}(y,x) \tilde{N}^{\Omega_0}_{\gamma^{(2)},q^{(2)}}(z,x) ) \mbox{d}x\nonumber 
\end{eqnarray}

For $y,z\in 
S_0 =\{x\in \mathbb{R}^n : 0<\mbox{dist}(x,\overline{\Omega})< r_0 \} $, the functions   $\tilde{N}^{\Omega_0}_{\gamma^{(1)},q^{(1)}}(y,\cdot)$ and $\tilde{N}^{\Omega_0}_{\gamma^{(2)},q^{(2)}}(z,\cdot)$ solve 
in $\Omega$ the equations
\begin{equation*}
\mbox{div}_x\left(\gamma^{(1)}(x)\nabla_x \tilde{N}^{\Omega_0}_{\gamma^{(1)},q^{(1)}}(y,x)\right) - q^{(1)}(x)\tilde{N}^{\Omega_0}_{\gamma^{(1)},q^{(1)}}(y,x)=0\
\end{equation*}
and 
\begin{equation*}
\mbox{div}_x\left(\gamma^{(2)}(x)\nabla_x \tilde{N}^{\Omega_0}_{\gamma^{(2)},q^{(2)}}(z,x)\right) - q^{(2)}(x)\tilde{N}^{\Omega_0}_{\gamma^{(2)},q^{(2)}}(z,x)=0.
\end{equation*}
By Alessandrini identity \cite[Chap.5]{IsakovBook} we then have that, by \eqref{Somega},
\begin{eqnarray*}
S(y,z) = \langle \gamma^{(1)}(x) \nabla_x \tilde{N}^{\Omega_0}_{\gamma^{(1)},q^{(1)}}(y, \cdot) \cdot \nu, (\mathcal{N}_{L}^{\gamma^{(1)},q^{(1)}} - \mathcal{N}_{L}^{\gamma^{(2)},q^{(2)}})\gamma^{(2)}(x) \nabla_x \tilde{N}^{\Omega_0}_{\gamma^{(2)},q^{(2)}}(z, \cdot) \cdot \nu   
\rangle_{\partial\Omega}.
\end{eqnarray*}
Hence, by \eqref{mappelinug},
\begin{equation*}
    S(y,z)=0 \mbox{ for every }y,z\in \Omega_0\setminus\Omega.
\end{equation*}
Since $S(y,z)$ satisfies the unique continuation properties in each variable $y,z\in \Omega\setminus D$, by \cite{alessandrini2009stability},  we have that 
\begin{eqnarray*}
    S(y,z)=0 \ \ \ \mbox{for all}\ \ y,z\in \Omega\setminus D
\end{eqnarray*}

So, by \eqref{prodgrad}, \eqref{prodfunz}, \eqref{S}, and extending $S$ and $\nabla S$ to zero on $\partial D$
\begin{eqnarray}\label{magia}
&&\int_{D} [(\gamma^{(1)}(x)-\gamma^{(2)}(x)) \nabla_x u^{(1)}(x)\cdot \nabla_x u^{(2)}(x) + (q^{(1)}(x)-q^{(2)}(x))u^{(1)}(x)u^{(2)}(x)]\mbox{d}x =\nonumber\\
&& \int_{\partial D\times \partial D} \psi(y)\psi(z)S(y,z)\mbox{d}\sigma(y) \mbox{d}\sigma(z) +\nonumber\\
&& \int_{\partial D \times (\Omega\setminus D)} \psi(y) [q^{(2)}(z) \bar{u}_2(z)S(y,z) +\gamma^{(2)}_{lk}(z)\partial_{z_l}\bar{u}_2(z)\partial_{z_k}S(y,z) ] \mbox{d}\sigma(y) \mbox{d}z\nonumber\\
&&\int_{(\Omega\setminus D) \times \partial D} \psi(z)[q^{(1)}(y)\bar{u}_1(y)S(y,z) + \gamma^{(1)}_{lk}(y)\partial_{y_l}\bar{u}_1(y)\partial_{y_k}S(y,z)  ]\mbox{d}\sigma(z) \mbox{d}y \nonumber\\
&& \int_{(\Omega\setminus D)\times (\Omega\setminus D)} \gamma^{(2)}_{lk}(z)\partial_{z_l}\bar{u}_2(z)\gamma^{(1)}_{nm}(y)\partial_{y_n}\bar{u}_1(y)\partial_{z_k}\partial_{y_l}S(y,z)\mbox{d}y\mbox{d}z +\\
&& \int_{(\Omega\setminus D)\times (\Omega\setminus D)} \gamma^{(1)}_{lk}(y)q^{(2)}(z)\partial_{y_l}\bar{u}_1(y)\bar{u}_2(z)\partial_{y_k}S(y,z)\mbox{d}y\mbox{d}z + \nonumber\\
&&\int_{(\Omega \setminus D)\times (\Omega\setminus D)} \gamma^{(2)}_{lk}(z)q^{(1)}(y)\bar{u}_1(y)\partial_{z_l}\bar{u}_2(z)\partial_{z_k}S(y,z) \mbox{d}y\mbox{d}z +\nonumber\\
&&\int_{(\Omega \setminus D)\times (\Omega\setminus D)} \bar{u}_1(y)\bar{u}_2(z)q^{(1)}(y)q^{(2)}(z)S(y,z) \mbox{d}y\mbox{d}z =0\nonumber
\end{eqnarray}

By Alessandriniidentity applied on $D$  and by \eqref{magia}, we have for any $\psi$
\begin{eqnarray*}
 \langle \psi, (\mathcal{N}_{L,\partial D}^{\gamma^{(1)},q^{(1)}}-\mathcal{N}_{L,\partial D}^{\gamma^{(2)},q^{(2)}})\psi \rangle_{\partial D}   &=& \int_{D} [(\gamma^{(1)}(x)-\gamma^{(2)}(x)) \nabla_x u^{(1)}(x)\cdot \nabla_x u^{(2)}(x) \\&&+ (q^{(1)}(x)-q^{(2)}(x))u^{(1)}(x)u^{(2)}(x)]\mbox{d}x=0,
\end{eqnarray*}
and, since the linear NtoD map is a self-adjoint operator, we get $$\mathcal{N}_{L, \partial D}^{\gamma^{(1)},q^{(1)}}=\mathcal{N}_{L,\partial D}^{\gamma^{(2)},q^{(2)}}\ .$$

At this point we can use Proposition \ref{lemma4.5}, where the maps $\mathcal{N}_{L,\partial D}^{\gamma^{(i)},q^{(i)}} $
takes the place of $\mathcal{N}_{L}^{\gamma^{(i)},q^{(i)}} $  and the boundary $\partial D $ takes the place of $\partial \Omega$, to conclude that $\gamma_1^{(1)}=\gamma_1^{(2)}$. Therefore,  $\gamma^{(1)}=\gamma^{(2)}$ in $\Omega$ which concludes the proof.
\end{proof}
\subsection{Final remarks and extensions}
\begin{remark}
The uniqueness result of Theorem~\ref{unicis} still holds if one replaces the global
NtoD map with its \emph{local} counterpart on a nonempty, non-flat open portion $\Sigma\subset\partial\Omega$. Indeed, one can verify that all the results
in Sections~2 and~3 remain valid in this
setting. The results of Section 4 can be easily adapted to the local NtoD map, and hence 
the conclusions of Theorem~\ref{unicis} continue to hold.
\end{remark}
\begin{remark}
The identification of the inclusion $D$ from local Neumann-to-Dirichlet data
has also been studied in the linear setting by Alessandrini in \cite{Al-dH-G-S} and, for an anisotropic Schr\"odinger type equation in the following preprint \cite{donlon2025uniqueness}. Roughly speaking, their results
show that, under suitable geometric hypotheses (i.e. all the boundary of the inclusion $D$ is non flat) and with some visibility assumption (i.e. that the coefficient inside the inclusion is different from the one outside the inclusion), one can recover both the
piecewise constant anisotropic conductivity and the shape of the inclusion
from local measurements on $\Sigma$.

The proof is
purely by contradiction, and therefore non-constructive: it does not provide
an explicit procedure for reconstructing $D$, but only rules out the existence
of two different inclusions compatible with the same data. 

\end{remark}


\begin{remark}
From the above proof, it is clear that we can extend the uniqueness theorem thoroughly to the case of $N$ nested domains $\{\Omega_j\}_{j=0}^N$ where $\Omega_0:=\Omega$ and $\Omega_N\subset\!\subset\Omega_{N-1}\subset\!\subset\dots\Omega_1 \subset\!\subset\Omega_0$.  We expect a similar result to hold also in the case of more general partitions.
\end{remark}

Several questions remain open. We do not address stability, which is essential for quantitative reconstructions. The simultaneous identification of \(\gamma\) and \(\alpha\) from NtD data, particularly in the anisotropic case and
when linearizing around nontrivial backgrounds, is still unresolved. Extensions to inclusions touching the boundary, and to more general conductivity partitions, as well as the design of robust numerical algorithms
consistent with this analytical framework, are natural directions for future work.


\section{Conclusions}

We have studied an inverse boundary value problem for a stationary semilinear elliptic equation arising in cardiac electrophysiology, where anisotropic conduction is modeled by a symmetric tensor \(\gamma\) and nonlinear ionic effects
by a cubic term \(\alpha u^{3}\). The measurements are encoded by the nonlinear NtD map, corresponding to pacing currents prescribed on the boundary and the resulting stationary transmembrane potentials.

Under the assumption that \(\alpha\)  is known and that \(\gamma(x)=\gamma_0\chi_{\Omega\setminus D}(x)+\gamma_1\chi_{D}(x)\), we showed that the nonlinear NtD map uniquely determines the anisotropic conductivities $\gamma_0$ and $\gamma_1$. A key feature of our approach is that the linearization is carried out around a nontrivial strictly positive solution associated with a physiologically meaningful pacing current, rather
than around the trivial state. In contrast to previous works on inverse problems for semilinear PDEs, which are formulated in terms of the DtN map, our analysis is based on the NtD map, which better reflects the pacing-guided experimental setup. To our knowledge, this is the first uniqueness result for anisotropic conductivities from NtD data in such a nonlinear
setting.

\section*{Aknowledgments}
The authors want to thank Stefano Pagani of Politecnico of Milan for bringing this problem to their attention.\\
EB has been partially supported by NYUAD Science Program Project Fund AD364.
The work of EF and ES have been supported by Gruppo Nazionale per l’Analisi Matematica, la Probabilità e le loro applicazioni (GNAMPA) by the grant ”Problemi inversi per equazioni alle derivate parziali”. EF and ES are supported by the Italian MUR through the PRIN 2022 project “Inverse problems in PDE: theoretical and numerical analysis”, project code 2022B32J5C, under the National Recovery and Resilience Plan (PNRR), Italy, funded by the European Union- Next Generation EU, Mission 4 Component 1 CUP F53D23002710006.

\bibliographystyle{alpha}
\bibliography{sample}
\end{document}